\def\NAT@spacechar{~}
\colorlet{shadecolor}{yellow}
\definecolor{light-gray}{gray}{0.95}
\theoremstyle{plain}
\newtheorem{theorem}{Theorem}
\newtheorem{proposition}{Proposition}
\newtheorem{lemma}{Lemma}
\theoremstyle{remark}
\newtheorem{remark}{Remark}
\theoremstyle{definition}
\newtheorem{definition}{Definition}
\newtheorem*{definition*}{Definition}
\newtheorem{assumption}{Assumption}
\newcommand\lp{\left (}
\newcommand\rp{\right )}
\newcommand\var{\hbox{\rm Var}}
\def\convd{\stackrel{\mbox{$\scriptstyle d$}}{\rightarrow}}
\DeclareMathOperator*{\argmin}{arg\,min}
\newcommand \eg{\varepsilon}
\newcommand \la{\lambda}
\newcommand \og{\omega}
\begin{document}

\title{Estimation of the long-run variance of nonlinear
time series with an application to change point analysis}

\author{Vaidotas Characiejus\footnote{Correspondence to:
Vaidotas Characiejus,  Department of Mathematics and Computer Science,
University of Southern Denmark, Campusvej 55, 5230 Odense M, Denmark.\newline Email: characiejus@imada.sdu.dk}\\
{\small University of Southern Denmark}
\and
Piotr Kokoszka \\
{\small Colorado State University}
\and
Xiangdong Meng \\
{\small Colorado State University}
}
\date{\today}
\maketitle

\begin{abstract}

For a broad class of nonlinear time series known as Bernoulli
shifts, we establish the asymptotic normality of the smoothed
periodogram estimator of the long-run variance. This estimator
uses only a narrow band of Fourier frequencies around the origin
and so has been extensively used in local Whittle estimation.
Existing asymptotic normality results
apply only to linear time series, so  our work substantially
extends the scope of the applicability of the  smoothed
periodogram estimator. As an illustration, we
apply it to a test of changes in mean against long-range dependence.
A simulation study is also conducted to illustrate the
performance of the test for nonlinear time series.

\bigskip

\noindent {\it Keywords:} Nonlinear time series,
Long-run variance, Periodogram.

\bigskip

\noindent {\it MSC 2020  subject classification:}  \ 62M10; 62M15.

\bigskip

\end{abstract}

\section{Introduction} \label{s:int}
Over  the past two decades, classical temporal dependence conditions
based on
cumulants or mixing coefficients have been complemented
by conditions formulated in
terms of nonlinear moving averages of  the form
\begin{equation}\label{eq:defX_t}
	X_t = g(\varepsilon_t,\varepsilon_{t-1},\ldots), \ \ \ t\in\mathbb Z,
\end{equation}
where $g$ can be a fairly general function.
Among them,  the {\em Physical or Functional Dependence}
and the {\em Approximability}
have gained particular popularity and have been used in dozens
of publications. Physical dependence was introduced by
\citet{wu:2005b} and studied or assumed in
\citet{Wu2010},  \citet{Liu:probability:2013},
\citet{zhou:2013},
\citet{zhang2017asymptotic}, \citet{dette:kokot:volgushev:2020} and
\citet{vandelft:2020}, among many others.
Approximability was also  used in a large number of papers, for example, in
\citet{aue:hormann:horvath:reimherr:2009},
\citet{hormann:kokoszka:2010}, \citet{berkes:horvath:rice:2013},
\citet{horvath:kokoszka:reeder:2013},
\citet{horvath:kokoszka:rice:2014},
\citet{kokoszka:miao:zhang:2015}, \citet{zhang:2016},
\citet{bardsley:2017} and \citet{horvath:kokoszka:lu:2024}.

Estimation of the spectral density function,  which includes
estimation of the long-run variance (LRV),  is a key problem in time
series analysis that has been studied for several decades.
The LRV of a scalar time series is introduced in most
time series textbooks, e.g.\ in \citet{hamilton:1994}.
Multivariate, high-dimensional and functional analogs of the long-run
variance have been
extensively investigated. The  contributions of \citet{wu:xiao:2012},
\citet{berkes:horvath:rice:2016},   \citet{chan:2022}
and \citet{baek:duker:pipiras:2023}, among many others,
provide many references. \citet{mcelroy:politis:2024} explain
why estimation of the spectral density at frequencies near zero
requires care and propose effective solutions.

Specialized to the estimation of the LRV,  the lag window estimator,
\begin{equation}\label{eq:lag-w-general}
	\hat{f}(0) = \frac{1}{2\pi} \sum_{|r| \le n-1}w_n(r)
\hat{\gamma}(r),
\end{equation}
where the $w_n(r)$ are weights such that $w_n(r)\to0$,
 as $|r|\to\infty$,  and the $\hat{\gamma}(r)$ 
 are the usual sample autocovariances,
has been particularly extensively studied.
\citet{anderson:1971} developed its theory for linear processes,
while \citet{rosenblatt:1984} considered strong mixing processes
satisfying cumulant summability conditions.
These results were extended to nonlinear time series.
\citet{chanda:2005} assumed  a specific form
of the general Bernoulli shifts~\eqref{eq:defX_t},
namely (for a fixed $w$),
$
X_t = \eg_t + \sum_{r=1}^\infty g_r (\eg_{t-1}, \ldots,  \eg_{t-r-w}),
$
with summability conditions on the fourth moments of
the  $g_r (\eg_{t-1}, \ldots,  \eg_{t-r-w})$. The above representation
is motivated by a truncated Volterra expansion.
Results of \citet{shao:wu:2007b}, \citet{wu:shao:2007},
and \citet{liu:wu:2010}, which  assume only
the general representation \eqref{eq:defX_t},
can be specialized to yield  asymptotic normality
of the estimator \eqref{eq:lag-w-general}
under the additional assumption that the lag 
windows in~\eqref{eq:lag-w-general} 
are of the scale parameter form (i.e., $w_n(r)=w(r/B_n)$, 
where $w$ is a kernel and $B_n$ is a bandwidth) and
under various conditions
related to  the physical dependence of \citet{wu:2005b}.

In this paper, we consider the smoothed periodogram estimator
\begin{equation}\label{e:Qn}
Q_n = \frac1m\sum_{j=1}^m I_n(\omega_j),
\end{equation}
where $I_n(\omega_j)$ is the periodogram at the Fourier frequency
$\omega_j$ and $m$ is the count of frequencies used in the estimation
($m/n \to 0$). While the lag window estimator \eqref{eq:lag-w-general}
can be expressed as a weighted sum of the periodogram over {\em all}
Fourier frequencies, estimator~\eqref{e:Qn} uses only a narrow band
of local Fourier frequencies. For this reason, it is particularly
relevant in local Whittle estimation, see  e.g.\
\citet{robinson:1995}, \citet{velasco:1999},
\citet{phillips:shimotsu:2004}, \citet{robinson:2008},
\citet{baek:pipiras:2012},
\citet{li:robinson:shang:2021} and  \citet{baek:kokoszka:meng:2024}.

To the best of our knowledge, the asymptotic distribution of 
estimator~\eqref{e:Qn}
is only available under the assumption of linearity,
see \citet{giraitis:koul:surgailis:2012} for a 
comprehensive account. The estimator~\eqref{e:Qn} 
can be alternatively expressed as the lag window 
estimator~\eqref{eq:lag-w-general} with $w_n(0)=1$ and
\begin{equation}\label{eq:Qn_w}
	w_n(r)
	=\frac1{2m}\Bigl[\frac{\sin((2m+1)\pi r/n)}{\sin(\pi r/n)}-1\Bigr]
\end{equation}
for $0<|r|<n-1$. The $w_n(r)$ in~\eqref{eq:Qn_w} 
are  not of the scale parameter form and the asymptotic 
distribution of $Q_n$ does not follow from the currently 
available results for nonlinear time series.

A related but a different estimator of the LRV is Daniell's estimator given by (see, for example, Chapter~9 of \citet{anderson:1971})
\begin{equation}\label{eq:daniell}
	\frac{n/m}\pi\int_0^{\pi/(n/m)}I_n(\omega)\, d\omega
	=\frac1{2\pi}\sum_{r=-(T-1)}^{T-1}\frac{\sin(\pi r/(n/m))}{\pi r/(n/m)}\hat\gamma(r)
\end{equation}
with the convention that $\sin(\pi r/(n/m))/(\pi r/(n/m))=1$ 
when $r=0$. Daniell's lag window $w(r)=\sin(\pi r)/(\pi r)$ 
is of the scale parameter form so the asymptotic normality 
of Daniell's estimator for nonlinear time series follows 
directly from the currently available results 
(see Theorem~2 of \citet{liu:wu:2010}),
 unlike the asymptotic normality of the estimator $Q_n$.

In this paper, we establish the asymptotic normality of $Q_n$ for general nonlinear
moving averages and present an application to  change point analysis.
We note that \citet{brillinger:1969} considers an estimator
similar to \eqref{e:Qn} (see his equation (6.6)), but for a
fixed $m$, and \eqref{e:Qn} is not even consistent then.
Our proofs are based on a general central limit 
theorem for quadratic forms established by \citet{liu:wu:2010},  
which we state as Theorem~\ref{thm:liu:wu:2010:6}.

The relationship between the lag window estimator at any frequency $\og$
and the periodogram ordinates is explained in Section 6.2.3 of
\citet{priestley:1981} and Section 3 of \citet{shao:wu:2007b},
among others. To emphasize the difference, we note that
the lag window estimator can be expressed
as $\hat{f}(\og) = \int_{-\pi}^\pi K_n(\og - \la) I_n(\la) d\la$,
so it involves all frequencies. On the other hand,  the quadratic
form of the lag window estimator involves only the terms $X_t X_\tau$ with
$|t-\tau|\le \ell_n$, whereas the smoothed periodogram involves
these terms with all indexes $t, \tau$. These differences explain
why different techniques of proof must be used for these two
classes of estimators.

The paper is organized as follows. In Section \ref{s:pre}, we introduce
the objects we study. Section \ref{s:est} contains  the main theorems,
while Sections \ref{s:cpa} and \ref{s:sim}
present, respectively, a theoretical application to change point analysis
and a related,  small simulation study.
All proofs are collected in Section~\ref{s:p},  
which also contains a central limit theorem 
for weighted sums of the periodogram ordinates that 
we use in our proofs,  and that is also of independent interest. 
Graphs and Tables related to the simulation study are
presented in online Supporting Information.

\section{Preliminaries} \label{s:pre}
\subsection{The spectral density and the periodogram} \label{ss:sd}
We assume in the
following that $\{X_t\}_{t\in\mathbb Z}$  is a strictly stationary
sequence of real random variables satisfying
\[
\operatorname EX_1 = 0\quad\text{and}\quad\operatorname EX_1^2 < \infty.
\]
Let $\{\gamma(h)\}_{h\in\mathbb Z}$ be the sequence of autocovariances
of $\{X_t\}_{t\in\mathbb Z}$, where
$\gamma(h)=\operatorname{Cov}(X_h,X_0)$ for $h\in\mathbb Z$.
The spectral density function of the series $\{X_t \}_{t\in\mathbb Z}$
is defined by
\begin{equation}\label{eq:spectraldensity}
	f(\omega) = \frac1{2\pi}\sum_{h=-\infty}^\infty
\gamma(h)e^{-ih\omega}, \ \ \ \omega\in[-\pi,\pi].
\end{equation}
The discrete Fourier transform (DFT) of $X_1,\ldots,X_n$
and the periodogram are defined by
\begin{equation}\label{eq:dftandper}
	\mathcal X_n(\omega_j)
	=\frac1{\sqrt{2\pi n}}\sum_{t=1}^nX_te^{-it\omega_j} \ \ \
\text{and} \ \ \
	I_n(\omega_j) = |\mathcal X_n(\omega_j)|^2
\end{equation}
with the Fourier frequencies $\omega_j=2\pi j/n$,
where $j\in\{-\lfloor (n-1)/2\rfloor,\ldots,\lfloor n/2\rfloor\}$ and $\lfloor\cdot\rfloor$ is the floor function.

\subsection{Weak dependence}
Let $\{X_t\}_{t\in\mathbb Z}$ be a sequence of
real random variables of the form \eqref{eq:defX_t},
where $\{\varepsilon_t\}_{t\in\mathbb Z}$ is a sequence of independent and
identically distributed (iid) random elements in a measurable space
$S$ and $g:S^\infty\to\mathbb R$ is a measurable function.
Series of the form \eqref{eq:defX_t} are often called Bernoulli shifts
and are automatically strictly stationary.

Suppose that $\{\varepsilon_t'\}_{t\in\mathbb Z}$ is an independent copy
of $\{\varepsilon_t\}_{t\in\mathbb Z}$. Define $X_{t,\{0\}}$
by replacing $\varepsilon_0$ in~\eqref{eq:defX_t}
by $\varepsilon_0'$, i.e.,
\begin{equation}\label{eq:Xt0}
	X_{t,\{0\}}
	=  g(\varepsilon_t,\ldots,\varepsilon_1,\varepsilon_0',\varepsilon_{-1},
\ldots), \ \ \ t\in\mathbb Z.
\end{equation}
We write $X\in\mathcal L^p$ with $p>0$ if $\|X\|_p
= (\operatorname E|X|^p)^{1/p}<\infty$.
The physical dependence measure is defined by
\begin{equation}\label{eq:delta}
	\delta_{t,p}
	= \|X_t-X_{t,\{0\}}\|_p, \ \ \ t\in\mathbb Z.
\end{equation}
Observe that $\delta_{t,p}=0$ if $t<0$.
Even though $\delta_{t,p}$ can be considered for any positive
value of $p$, typically $p\ge1$ is assumed
since otherwise $\|\cdot\|_p$ is no longer a norm
(we will work with $p=4$).
For completeness, we recall  Definition~3 of \citet{wu:2005b}.

\begin{definition} The series
$\{X_t\}_{t\in\mathbb Z}$ of the form \eqref{eq:defX_t}  is
$p$-strong stable if
\begin{equation}\label{eq:Thetamp}
	\Theta_p
	= \sum_{t=0}^\infty\delta_{t,p}<\infty, \ \ \ p\ge1,
\end{equation}
with  the $\delta_{t,p}$ defined in \eqref{eq:delta} and \eqref{eq:Xt0}.
\end{definition}

The decay rate of the physical dependence measure $\delta_{t,2}$ is related to the decay rate of the autocovariances $\gamma(h)$. For $r\ge0$,
\begin{equation}\label{eq:covbound}
	\sum_{h=1}^\infty h^r|\gamma(h)|
\le\sum_{t=0}^\infty\delta_{t,2}\sum_{h=1}^\infty h^r\delta_{h,2}
\end{equation}
since $|\gamma(h)|\le\sum_{t=0}^\infty\delta_{t,2}\delta_{t+h,2}$ 
(see Lemma~8 of \citet{xiao:wu:2012}) so that $\sum_{h=1}^\infty h^r|\gamma(h)|<\infty$ if $\sum_{h=1}^\infty h^r\delta_{h,2}<\infty$. In particular, $\Theta_2<\infty$ implies absolute summability of the autocovariances. The decay rate of the physical dependence measure is also related to the summability of joint cumulants up to certain orders (see Theorem~4.1 of \citet{shao:wu:2007a}).

\citet{wu:2005b} also considers the case when instead of
replacing $\varepsilon_0$ with its independent copy,
$\varepsilon_t$'s in~\eqref{eq:defX_t} with indices
in some index set $I\subset\mathbb Z$ are replaced with their
independent copies. If all but the first $m$
$\varepsilon_t$'s in~\eqref{eq:defX_t}
are replaced with their independent copies, we obtain the concept
of $L^p$-$m$-approximability. Again for completeness, we recall
Definition~2.1 of \citet{hoermann:kokoszka:2010}.

\begin{definition} The series
$\{X_t\}_{t\in\mathbb Z}$ of the form \eqref{eq:defX_t}  is
$L^p$-$m$-approximable if
\begin{equation}\label{eq:Lpmdefinition}
	\sum_{t=1}^\infty\|X_t-X_t^{(t)}\|_p<\infty, \ \ \ p\ge1,
\end{equation}
where
\begin{equation}\label{eq:Xtm}
	X_t^{(m)}
	=g(\varepsilon_t,\ldots,
\varepsilon_{t-m+1},\varepsilon_{t-m}',\varepsilon_{t-m-1}',\ldots).
\end{equation}
\end{definition}
Observe that $\|X_t-X_t^{(t)}\|_p
=\|X_s-X_s^{(t)}\|_p$ for $s,t\in\mathbb Z$ so that the right-hand  side
of~\eqref{eq:Lpmdefinition} can be formulated solely in terms
of $X_1$ and the approximations $X_1^{(t)}$ with $t\ge1$.

The following lemma shows  that $L^p$-$m$-approximability
implies $p$-strong stability.
\begin{lemma}\label{l:comp}
The inequality
\begin{equation}\label{eq:pstabLpm}
	\delta_{t,p}=\|X_t-X_{t,\{0\}}\|_p
	\le2\|X_t-X_t^{(t)}\|_p
\end{equation}
holds for $t\ge1$ and $p\ge1$ with $X_t$, $X_{t,\{0\}}$, and $X_t^{(t)}$
given by~\eqref{eq:defX_t}, \eqref{eq:Xt0}, and \eqref{eq:Xtm},  respectively.
\end{lemma}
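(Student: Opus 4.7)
The plan is to apply the triangle inequality through a carefully chosen intermediate random variable $Y$. The heuristic driving the choice is that $X_t$ and $X_{t,\{0\}}$ are iid copies of the same law conditionally on $(\varepsilon_s)_{s\neq 0}$, so they ought to obey a ``$\le 2$-times-distance-to-any-reference'' inequality in $L^p$; the goal is to pick $Y$ whose joint distribution with \emph{each} of $X_t$ and $X_{t,\{0\}}$ matches that of $(X_t, X_t^{(t)})$.

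To this end I will introduce a further independent iid sequence $(\varepsilon_s'')_{s\le 0}$ with the common marginal of $\varepsilon_0$, jointly independent of $\{\varepsilon_s\}_{s\in\mathbb Z}$ and $\{\varepsilon_s'\}_{s\le 0}$, and set
\[
Y := g(\varepsilon_t,\ldots,\varepsilon_1,\varepsilon_0'',\varepsilon_{-1}'',\varepsilon_{-2}'',\ldots).
\]
The core step is then to verify the two joint distributional identities
\[
(X_t,Y)\stackrel{d}{=}(X_t,X_t^{(t)})\qquad\text{and}\qquad(X_{t,\{0\}},Y)\stackrel{d}{=}(X_t,X_t^{(t)}).
\]
Both reduce to the same structural observation: each of these pairs consists of two evaluations of $g$ that share a common ``head'' $(\varepsilon_t,\ldots,\varepsilon_1)$, while drawing their ``tails'' at positions $s\le 0$ from two mutually independent, iid sequences with the common marginal. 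For $(X_t, Y)$ the two tails are $(\varepsilon_s)_{s\le 0}$ and $(\varepsilon_s'')_{s\le 0}$; for $(X_{t,\{0\}}, Y)$ they are $(\varepsilon_0',\varepsilon_{-1},\varepsilon_{-2},\ldots)$ and $(\varepsilon_s'')_{s\le 0}$. In each case both tails are iid with the common marginal, and mutual independence is built into the independence of the three collections $\{\varepsilon_s\}$, $\{\varepsilon_s'\}$, $\{\varepsilon_s''\}$.

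Given these identities, $\|X_t - Y\|_p = \|X_{t,\{0\}} - Y\|_p = \|X_t - X_t^{(t)}\|_p$, and the triangle inequality closes the argument:
\[
\|X_t - X_{t,\{0\}}\|_p \;\le\; \|X_t - Y\|_p + \|Y - X_{t,\{0\}}\|_p \;=\; 2\|X_t - X_t^{(t)}\|_p.
\]
The only step requiring a moment of care is the second distributional identity, since the tail of $X_{t,\{0\}}$ mixes one primed copy $\varepsilon_0'$ with the original sequence $(\varepsilon_s)_{s\le -1}$; however, both the iid property and the independence from $(\varepsilon_s'')_{s\le 0}$ follow immediately from the defining independence of the three iid copies.
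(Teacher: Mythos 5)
Your proof is correct, and it takes a genuinely different route from the paper. The paper's midpoint for the triangle inequality is the conditional expectation $\operatorname E[X_t\mid\mathcal F_{t,1}]$ with $\mathcal F_{t,1}=\sigma(\varepsilon_t,\ldots,\varepsilon_1)$: one leg is bounded by $\|X_t-X_t^{(t)}\|_p$ via the identity $\operatorname E[X_t\mid\mathcal F_{t,1}]=\operatorname E[X_t^{(t)}\mid\mathcal F_t]$ together with Jensen's inequality for conditional expectations, and the second leg is reduced to the first by noting that $\operatorname E[X_t\mid\mathcal F_{t,1}]$ does not depend on $\varepsilon_0$, so replacing $\varepsilon_0'$ by $\varepsilon_0$ preserves the joint law. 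Your midpoint is instead a third coupled copy $Y=g(\varepsilon_t,\ldots,\varepsilon_1,\varepsilon_0'',\varepsilon_{-1}'',\ldots)$, and the two distributional identities you need do hold for exactly the reason you give: in each pair the two evaluations of $g$ share the head $(\varepsilon_t,\ldots,\varepsilon_1)$ while their tails are iid sequences with the marginal of $\varepsilon_0$, and the triple (head, tail, tail) is mutually independent in all cases --- including the mixed tail $(\varepsilon_0',\varepsilon_{-1},\varepsilon_{-2},\ldots)$ of $X_{t,\{0\}}$, whose entries come from disjoint pieces of the mutually independent collections $\{\varepsilon_s\}$ and $\{\varepsilon_s'\}$. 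What each approach buys: yours is more elementary (no conditional expectations, no Jensen) and gives exact equalities $\|X_t-Y\|_p=\|X_{t,\{0\}}-Y\|_p=\|X_t-X_t^{(t)}\|_p$ for the two legs; it also transfers verbatim to settings where conditional Jensen is unavailable, e.g.\ random elements of a general metric space, and to $0<p<1$ after replacing the norm triangle inequality by subadditivity of $x\mapsto|x|^p$ (with a different constant). The paper's argument stays on the original probability space, whereas yours needs the space enlarged to support $(\varepsilon_s'')_{s\le0}$; this is harmless because $\|X_t-X_{t,\{0\}}\|_p$ and $\|X_t-X_t^{(t)}\|_p$ depend only on the joint law of $\{\varepsilon_s\}$ and $\{\varepsilon_s'\}$, but it deserves one explicit sentence in a polished write-up.
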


\begin{proof}[Proof of \autoref{l:comp}]
The proof essentially
follows from the proof of Lemma~3.0$^\star$ of \citet{vandelft:2019}
(see page~5 therein), but we give it here for the sake of convenience
(the published version, \citet{vandelft:2020},  does not contain
this lemma).

Denote the $\sigma$-algebras generated by the random elements
$\varepsilon_t,\varepsilon_{t-1},\ldots$ with $t\in\mathbb Z$
and $\varepsilon_t,\ldots,\varepsilon_s$ with $t\ge s$ by
$\mathcal F_t=\sigma(\varepsilon_t,\varepsilon_{t-1},\ldots)$
and $\mathcal F_{t,s}=\sigma(\varepsilon_t,\ldots,\varepsilon_s)$,
respectively. We have that
\[
	\|X_t-X_{t,\{0\}}\|_p
	\le\|X_t-\operatorname E[X_t\mid\mathcal F_{t,1}]\|_p
	+\|\operatorname E[X_t\mid\mathcal F_{t,1}]-X_{t,\{0\}}\|_p.
\]
Since $\{\varepsilon_t':t\in\mathbb Z\}$ is an independent copy of $\{\varepsilon_t:t\in\mathbb Z\}$,
\[
	\operatorname E[X_t\mid\mathcal F_{t,1}]
	=\operatorname E[X_t^{(t)}\mid\mathcal F_{t,1}]
	=\operatorname E[X_t^{(t)}\mid\mathcal F_t]
\]
almost surely and hence
\begin{equation}\label{eq:bounddepass}
	\|X_t-\operatorname E[X_t\mid\mathcal F_{t,1}]\|_p
	=\|\operatorname E[X_t\mid\mathcal F_t]-\operatorname E[X_t^{(t)}\mid\mathcal F_t]\|_p
	\le\|X_t-X_t^{(t)}\|_p
\end{equation}
using Jensen's inequality for conditional expectations with $p\ge1$. We conclude the proof by noticing that $\operatorname E[X_t\mid\mathcal F_{t,1}]$ does not depend on $\varepsilon_0$ and hence
\[
	\|\operatorname E[X_t\mid\mathcal F_{t,1}]-X_{t,\{0\}}\|_p
	=\|\operatorname E[X_t\mid\mathcal F_{t,1}]-X_t\|_p,
\]
which is bounded in the same way as in~\eqref{eq:bounddepass}.
The proof is complete.
\end{proof}

\begin{remark} \label{r:stronger}
The $L^p$-$m$-approximability is in fact strictly
stronger than $p$-strong stability.
Consider, for example, a linear process $\{X_t\}_{t\in\mathbb Z}$, where $X_t=\sum_{j=0}^\infty\psi_j\varepsilon_{t-j}$ for $t\in\mathbb Z$, $\{\varepsilon_t\}_{t\in\mathbb Z}$ is a sequence of iid random variables such that $\operatorname E\varepsilon_0=0$ and $\operatorname E\varepsilon_0^2<\infty$, while $\{\psi_j\}_{j\ge0}$ is a sequence such that $\sum_{j=0}^\infty\psi_j^2<\infty$. Observe that
\[
	\sum_{t=0}^\infty\|X_t-X_{t,\{0\}}\|_2
	=2^{1/2}\|\varepsilon_0\|_2\sum_{t=0}^\infty|\psi_t|
	\quad\text{and}\quad
	\sum_{t=1}^\infty\|X_t-X_t^{(t)}\|_2
	 =2^{1/2}\|\varepsilon_0\|_2\sum_{t=1}^\infty\Bigl(\sum_{j=t}^\infty\psi_j^2\Bigr)^{1/2}.
\]
Suppose that $\psi_j=(j+1)^{-d}$ for $j\ge0$ with some $d>1$. Then
\[
	\Bigl(\sum_{j=t}^\infty\psi_j^2\Bigr)^{1/2}
	\sim\frac{t^{1/2-d}}{(2d-1)^{1/2}}
	\quad\text{as}\quad t\to\infty.
\]
So $\{X_t\}_{t\in\mathbb Z}$ is $2$-strong
stable for any value of $d>1$ but $\{X_t\}_{t\in\mathbb Z}$
is $L^2$-$m$-approximable only if $d>3/2$.
\end{remark}

\section{Estimation of the long-run variance} \label{s:est}
Consider the time series $\{ X_t \}_{t\in\mathbb Z}$ and its spectral density $f$, both
defined in Section \ref{ss:sd}. The long-run variance of $\{ X_t \}_{t\in\mathbb Z}$
is defined as
\[
\tau_X : = \sum_{k= -\infty}^\infty \gamma(h) = 2\pi f(0).
\]
The estimation of $\tau_X$ can thus be reduced to the estimation of
$f(0)$, and this approach is commonly used.
A natural  estimator of $f(0)$, known as the smoothed periodogram or the
nonparametric  spectral estimator given by \eqref{e:Qn}.
The chief contribution of this
paper is showing  that  the estimator $Q_n$ is asymptotically
normal, with the rate $m^{-1/2}$,  for a broad class of nonlinear
moving averages of the form \eqref{eq:defX_t}.
We work under the following general assumption.

 \begin{assumption} \label{a:eta} The series $\{ X_t \}_{t\in\mathbb Z}$ admits
representation \eqref{eq:defX_t}  and for some $\eta>3$,
\begin{equation}\label{e:HRW2.7}
	\|X_t-X_t^{(m)}\|_4=O(m^{-\eta})\quad\text{as}\quad m\to\infty.
\end{equation}
\end{assumption}

It follows from \autoref{l:comp} that the bound in \autoref{a:eta} holds for the physical dependence measure $\delta_{t,4}$. Using inequality~\eqref{eq:covbound} and the fact that $\delta_{t,2}\le\delta_{t,4}$, we see that \autoref{a:eta} implies that
\begin{equation} \label{e:h3g}
\sum_{h=1}^\infty h^r|\gamma(h)|<\infty, \ \ \ 0\le r < \eta -1,
\end{equation}
which in turn implies that $f(0)$ is well-defined under \autoref{a:eta} due to the absolute summability of the autocovariances.

We now  state our main result.

\begin{theorem}\label{thm:res1}
Assume that $\operatorname EX_0=0$,
$\operatorname E|X_0|^4<\infty$, \autoref{a:eta} holds  and $m=m_n\to\infty$, but $m=o(n^{4/5})$,
as $n\to\infty$. Then
\begin{equation}\label{e:res1}
	\frac{\sqrt m}{f(0)}[Q_n-f(0)]
	=\frac1{\sqrt m}\sum_{j=1}^m\biggl(\frac{I_n(\omega_j)}{f(0)}-1\biggr)
\xrightarrow d\mathcal N(0,1), \ \ \ {\rm as} \  n \to\infty,
\end{equation}
where $Q_n$ is defined in~\eqref{e:Qn} and $f(0)$
in~\eqref{eq:spectraldensity}.
\end{theorem}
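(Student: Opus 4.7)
The plan is to decompose
\[
\frac{\sqrt m}{f(0)}[Q_n - f(0)] = \frac{\sqrt m}{f(0)}[Q_n - \operatorname E Q_n] + \frac{\sqrt m}{f(0)}[\operatorname E Q_n - f(0)]
\]
and show that the deterministic bias vanishes while the centered stochastic term converges to $\mathcal N(0,1)$. The bias analysis comes first. By Bartlett's identity, $\operatorname E[I_n(\omega_j)] = (2\pi)^{-1}\sum_{|h|<n}(1-|h|/n)\gamma(h)e^{-ih\omega_j}$, so by~\eqref{e:h3g} the pointwise bias $|\operatorname E[I_n(\omega_j)] - f(\omega_j)|$ is $O(1/n)$ uniformly in $j$. \autoref{a:kappa} with $\kappa>3$ moreover gives $\sum_h h^2|\gamma(h)|<\infty$, so $f$ is twice continuously differentiable near $0$; since $f$ is even and real-valued, $f'(0)=0$, and therefore $f(\omega_j)-f(0) = O(\omega_j^2) = O((j/n)^2)$. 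Averaging over $j\le m$ then gives $|\operatorname E Q_n - f(0)| = O(m^2/n^2) + O(1/n)$, which is $o(m^{-1/2})$ precisely when $m = o(n^{4/5})$; this pinpoints the role of the rate condition on $m$.

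For the centered term, I would use an $M$-dependent approximation $\tilde X_t$ of $X_t$ that is measurable with respect to $\sigma(\varepsilon_t,\ldots,\varepsilon_{t-M+1})$, with $M = M_n\to\infty$ slowly. \autoref{a:kappa} (combined with stationarity and conditional Jensen) yields $\|X_t - \tilde X_t\|_4 = O(M^{-\kappa'})$ for some $\kappa'>0$ depending on $\kappa$. Writing $\tilde{\mathcal X}_n$ and $\tilde I_n$ for the DFT and periodogram of $\{\tilde X_t\}$, I would split
\[
\frac{1}{\sqrt m}\sum_{j=1}^m\Bigl[\frac{I_n(\omega_j)}{f(0)}-1\Bigr] = \frac{1}{\sqrt m}\sum_{j=1}^m\Bigl[\frac{\tilde I_n(\omega_j)}{f(0)}-1\Bigr] + \frac{1}{\sqrt m\,f(0)}\sum_{j=1}^m\bigl[I_n(\omega_j) - \tilde I_n(\omega_j)\bigr].
\]
The remainder is handled via the identity $I_n - \tilde I_n = (\mathcal X_n - \tilde{\mathcal X}_n)\overline{\mathcal X}_n + \tilde{\mathcal X}_n\overline{(\mathcal X_n - \tilde{\mathcal X}_n)}$, the bounds $\|\mathcal X_n(\omega_j)\|_4 = O(1)$ and $\|\mathcal X_n(\omega_j) - \tilde{\mathcal X}_n(\omega_j)\|_4 = O(M^{-\kappa'})$ obtained from a Rosenthal-type inequality applied to the Fourier sum, and Cauchy--Schwarz; choosing $M$ large enough makes the remainder $o_P(1)$. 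For the $(M-1)$-dependent leading term I would apply the method of cumulants: use the standard product-cumulant expansion to express $\operatorname{cum}(\tilde I_n(\omega_{j_1}),\ldots,\tilde I_n(\omega_{j_k}))$ in terms of cumulants of $\tilde X_t$, exploit $M$-dependence to obtain decay in $n$, and verify that all cumulants of order $\ge 3$ of $m^{-1/2}\sum_{j=1}^m\tilde I_n(\omega_j)/f(0)$ vanish asymptotically while the variance converges to~$1$.

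The principal obstacle is the cumulant analysis. Unlike the Gaussian linear case, where periodogram ordinates are independent exponentials with mean $f(\omega_j)$, in the nonlinear setting the fourth-order cumulant of the underlying process contributes a nontrivial correction to the variance of the normalized sum, which must be shown to average out to $f(0)^2$ under the smoothing across $j=1,\ldots,m$, and higher-order cumulants of the $\tilde X_t$ produce genuinely new terms in the periodogram cumulants that must be bounded by careful combinatorial and oscillatory arguments exploiting the equispacing of the Fourier frequencies. A secondary difficulty is the joint tuning of $M = M_n$ and $m = m_n$: $M$ must grow fast enough (relative to $\kappa'$) so that the $L^2$ discrepancy between $\mathcal X_n$ and $\tilde{\mathcal X}_n$ is negligible on the $\sqrt m$-scale, yet slowly enough (relative to $n$) that the $(M-1)$-dependent periodogram ordinates remain approximately uncorrelated at the Fourier scale $2\pi/n$. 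With $\kappa>3$, a power-of-$n$ choice $M = n^{a}$ for an appropriate small $a\in(0,1)$ should accommodate both constraints.
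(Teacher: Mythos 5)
Your bias analysis is sound and coincides in substance with the paper's: the paper's \autoref{thm:convrate} gives $\operatorname EQ_n-f(0)=O(\max\{n^{-1},(m/n)^2\})$ under $\sum_{h}h^2|\gamma(h)|<\infty$ (which \autoref{a:kappa} supplies via \eqref{e:h3g}), and your route---Bartlett's identity for a uniform $O(1/n)$ term, then $f'(0)=0$ and a bounded second derivative for the $O((j/n)^2)$ term---is an equivalent way to the same bound, with the same identification of where $m=o(n^{4/5})$ enters. The genuine gap is in the other half, which is the actual content of the theorem. First, your plan leaves the CLT for the centered term unproved: the ``principal obstacle'' you name, the cumulant analysis for the $M$-dependent approximation, \emph{is} the theorem, not a technicality, and nothing in the proposal resolves it. Second, and more concretely, the method of cumulants cannot be run under the stated hypotheses. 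The $k$-th order joint cumulant of periodogram ordinates is a polynomial in joint moments of order $2k$ of the underlying process, so already the third cumulant of $m^{-1/2}\sum_{j\le m}\tilde I_n(\omega_j)$ requires sixth moments. The theorem assumes only $\operatorname E|X_0|^4<\infty$, and your approximation $\tilde X_t=\operatorname E[X_t\mid\varepsilon_t,\ldots,\varepsilon_{t-M+1}]$ inherits exactly these moments (conditional expectation creates no higher ones), so the cumulants you propose to bound need not exist. Salvaging the plan would require an additional truncation of $X_t$ at a level $T_n\to\infty$ together with control of the truncation error on the $\sqrt m$ scale; that layer is entirely absent. (A smaller point: the fourth-cumulant contribution to $\operatorname{Var}\bigl(\sum_{j\le m}I_n(\omega_j)\bigr)$ does not ``average out to $f(0)^2$''; it is negligible, of relative order $m/n$, while the diagonal terms supply $f(0)^2$.)

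For comparison, the paper avoids both problems by a much shorter route: it observes that $Q_n$ is the quadratic form $Q_n=(2\pi n)^{-1}\sum_{s,t=1}^n a_{n,s-t}X_sX_t$ with $a_{n,t}=m^{-1}\sum_{j=1}^m\cos(t\omega_j)$, and then verifies, using elementary trigonometric summation identities (\autoref{lemma:prodofcos}--\autoref{lemma:prodofa}), conditions (5.2)--(5.5) of Theorem~6 of \citet{liu:wu:2010}---a ready-made CLT for quadratic forms of Bernoulli shifts, proved by martingale approximation, which needs only $\Theta_4<\infty$ (implied by \autoref{a:kappa} through \autoref{l:comp}) and fourth moments. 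That is \autoref{t:clt1}; combined with \autoref{thm:convrate} it yields the theorem. If you wish to keep an approach in the spirit of yours, you should either import such a quadratic-form CLT, or replace the cumulant step by a martingale or big-block/small-block argument for the $M$-dependent quadratic form; both operate at the fourth-moment level, whereas the cumulant method as proposed does not.
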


\begin{remark}
The smoothed periodogram estimator typically  has the form
\[
\hat{f}(\og_k) = \frac{1}{m+1} \sum_{j=-m/2}^{m/2}
I_n\lp \og_k + \frac{2\pi j}{n}\rp.
\]
Thus, for $\og_k$  away from zero $\var[\hat{f}(\og_k)] \approx
f(\og_k)^2/m$, whereas at $k = 0$,
$\var[\hat{f}(\og_k)] \approx 2f(\og_k)^2/m$
because $I_n(\og_j) = I_n(-\og_j)$.
The estimator $Q_n$ does not include the $\og_{-j}$, so the factor 2 does
not appear.
\end{remark}

\autoref{thm:res1} follows from
Theorems~\ref{t:clt1} and \ref{thm:convrate}  formulated below.
We decompose the sequence in~\eqref{e:res1} into two terms
\begin{equation}\label{eq:twoparts}
	\frac{\sqrt m}{f(0)}[Q_n-f(0)]
	=\frac{\sqrt m}{f(0)}(Q_n-\operatorname EQ_n)+\frac{\sqrt{m}}{f(0)}(\operatorname EQ_n-f(0)).
\end{equation}
\autoref{t:clt1}, which implies that the first term on the right-hand
side of~\eqref{eq:twoparts} converges to $\mathcal N(0,1)$ in distribution,
is established under much weaker assumptions than \autoref{thm:res1}.
In particular,  the $L^p$-$m$-approximability of \autoref{a:eta}
is replaced by the weaker $p$-strong stability (by \autoref{l:comp},
\autoref{a:eta} implies that
$\Theta_4<\infty$). Furthermore, only the weakest possible assumption
on the rate of $m$ is imposed in \autoref{t:clt1} (\autoref{a:sublinear} below). We need to impose
a stronger assumption on the growth rate of $m$ as well as the decay rate of the covariances in \autoref{thm:res1}
than in \autoref{t:clt1} to ensure that the second term on the right-hand
side of~\eqref{eq:twoparts} is asymptotically negligible. \autoref{a:eta}
implies that $\sum_{h=1}^\infty h^2 |\gamma(h)| <\infty$  and hence,
according to \autoref{thm:convrate},
$\operatorname EQ_n-f(0)=O(\max\{n^{-1},(m/n)^2\})$, as $n\to\infty$.
The assumption in \autoref{thm:res1} that $m=o(n^{4/5})$ as $n\to\infty$ then
implies that $\sqrt m(m/n)^2=(m/n^{4/5})^{5/2}=o(1)$, as $n\to\infty$. 
If $\eta>1$ in \autoref{a:eta}, the autocovariances 
are absolutely summable but this only implies that 
$\operatorname EQ_n-f(0)=o(1)$ as $n\to\infty$ 
(see \autoref{thm:convrate} below),
 which is not sufficient to replace $\operatorname EQ_n$ 
 with $f(0)$ as the centering sequence in the central limit theorem.

Observe that~\eqref{e:res1} implies that
\begin{equation} \label{e:res2}
	\frac1m\sum_{j=1}^m\biggl(\frac{I_n(\omega_j)}{f(0)}-1\biggr)
	=O_p(m^{-1/2}), \ \ \ {\rm as} \ n\to\infty.
\end{equation}
Both \eqref{e:res1} and \eqref{e:res2} are used in theoretical work
related to the estimation of the long-run variance. We give an application
in Section \ref{s:cpa}.

For ease  of reference,
we  formulate the standard assumption on the growth rate of $m$.

\begin{assumption}\label{a:sublinear}
As $n\to\infty$, $m\to\infty$,  but $m=o(n)$.
\end{assumption}

\begin{theorem}\label{t:clt1}
Assume that $\operatorname EX_0=0$, $\operatorname E|X_0|^4<\infty$, $\Theta_4<\infty$
with $\Theta_4$ defined by~\eqref{eq:Thetamp}, and \autoref{a:sublinear} holds. Then
\begin{equation}\label{eq:clt1}
	\frac{\sqrt{m}}{f(0)}(Q_n-\operatorname EQ_n)
	\xrightarrow d\mathcal N(0,1), \ \ \ {\rm as} \ n\to\infty,
\end{equation}
where $Q_n$ is defined in~\eqref{e:Qn} and $f(0)$
in~\eqref{eq:spectraldensity}.
\end{theorem}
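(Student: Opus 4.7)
The plan is to prove \autoref{t:clt1} via an $M$-dependent truncation of the Bernoulli shift, a CLT argument for the truncated process, and a variance computation matching the normalization $f(0)^2$. For a slowly growing truncation level $M = M_n$ (distinct from the smoothing parameter $m$), I would replace $X_t$ by the $M$-dependent approximation $X_t^{(M)}$ from~\eqref{eq:Xtm}, obtaining DFTs $\mathcal{X}_n^{(M)}(\omega_j)$, periodograms $I_n^{(M)}(\omega_j)$, and the average $Q_n^{(M)}$. A telescoping argument over coordinates of $g$ yields $\|X_t-X_t^{(M)}\|_4\le\sum_{s>M}\delta_{s,4}$, which vanishes as $M\to\infty$ by $\Theta_4<\infty$. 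Combining this with the algebraic identity
\[
I_n(\omega_j) - I_n^{(M)}(\omega_j) = (\mathcal{X}_n(\omega_j) - \mathcal{X}_n^{(M)}(\omega_j))\overline{\mathcal{X}_n(\omega_j)} + \mathcal{X}_n^{(M)}(\omega_j)(\overline{\mathcal{X}_n(\omega_j)} - \overline{\mathcal{X}_n^{(M)}(\omega_j)})
\]
and the Hölder inequality, control of the DFT differences propagates to a uniform $L^2$-bound on $I_n(\omega_j) - I_n^{(M)}(\omega_j)$. Averaging over $j$ and rescaling by $\sqrt{m}$, one obtains $(\sqrt{m}/f(0))[(Q_n - Q_n^{(M)}) - \operatorname E(Q_n - Q_n^{(M)})] = o_p(1)$, provided $M_n$ grows slowly enough.

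For the $M$-dependent process $\{X_t^{(M)}\}_{t\in\mathbb Z}$, I would apply the big-block/small-block technique in the time domain: partition $\{1,\ldots,n\}$ into alternating long blocks of length $L_n$ and short blocks of length $M_n$, with $M_n/L_n\to 0$ and $L_nm/n\to 0$. Expressing the centered sum $\sum_{j=1}^m(I_n^{(M)}(\omega_j) - \operatorname E I_n^{(M)}(\omega_j))$ as a quadratic form $\sum_{s,t=1}^n K_n(s-t)[X_s^{(M)}X_t^{(M)} - \operatorname E X_s^{(M)}X_t^{(M)}]$ with kernel $K_n(h)=(2\pi n)^{-1}\sum_{j=1}^m\cos(h\omega_j)$ and $|K_n(h)|\le m/(2\pi n)$, the contributions from pairs $(s,t)$ within a single big block are independent across big blocks by $M$-dependence, while pairs straddling small blocks are asymptotically negligible. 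A Lindeberg--Feller CLT applied to the resulting independent triangular array yields asymptotic normality, with the Lindeberg condition following from $\operatorname E|X_0|^4<\infty$ and the kernel bound. In parallel, a direct computation gives $(m/f(0)^2)\operatorname{Var}(Q_n)\to 1$: expanding
\[
\operatorname{Cov}(I_n(\omega_j),I_n(\omega_k)) = |\operatorname E\mathcal{X}_n(\omega_j)\overline{\mathcal{X}_n(\omega_k)}|^2 + |\operatorname E\mathcal{X}_n(\omega_j)\mathcal{X}_n(\omega_k)|^2 + R_n(j,k),
\]
the first two terms contribute $f(0)^2+o(1)$ on the diagonal $j=k$ and $o(1)$ off it, while the fourth-cumulant remainder satisfies $\sum_{j,k\le m}|R_n(j,k)|=o(m^2)$ under $\Theta_4<\infty$.

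The principal technical obstacle is controlling fourth-order cumulants of $\{X_t\}$ and verifying the Lindeberg condition using only $p$-strong stability (rather than the stronger $L^p$-$m$-approximability of \autoref{a:kappa}); under the weaker condition the cumulant bounds phrased in terms of the $\delta_{t,4}$ become substantially more delicate than their $L^p$-$m$-approximability counterparts, and the standard Wu--Shao-type machinery needs to be adapted. Equally delicate is the joint tuning of the truncation level $M_n$, the block length $L_n$, and the smoothing parameter $m$, which must be coordinated so that the approximation error from Step~1, the small-block remainder from Step~2, and the Lindeberg tail all vanish simultaneously.
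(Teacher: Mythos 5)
Your route (reduction to an $M$-dependent process, big-block/small-block partition in the time domain, Lindeberg CLT, separate variance matching) is a genuinely different architecture from the paper's, but as written it has two gaps that are fatal under the theorem's actual hypotheses, which carry \emph{no rate} on the physical dependence coefficients. First, the truncation step points in the wrong direction. Controlling $\sqrt m\,(Q_n-Q_n^{(M)})$ through a uniform $L^2$ bound on $I_n(\omega_j)-I_n^{(M)}(\omega_j)$ plus averaging over $j\le m$ is a triangle-inequality bound: it requires $\sup_{j\le m}\|I_n(\omega_j)-I_n^{(M_n)}(\omega_j)\|_2=o(m^{-1/2})$, i.e.\ roughly $\sum_{k>M_n}\delta_{k,4}=o(m^{-1/2})$, so $M_n$ must grow \emph{fast}, not ``slowly''. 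Since $\Theta_4<\infty$ permits $\delta_{t,4}\sim t^{-1}(\log t)^{-2}$, whose tail sums are of order $1/\log M$, this forces $M_n\ge\exp(c\sqrt m)$, which for, e.g., $m=(\log n)^3$ exceeds $n$ itself; that is irreconcilable with the blocking constraints $M_n=o(L_n)$, $L_n=o(n)$. (Note also that you need $L_n\gg n/m$, because $a_{n,h}=m^{-1}\sum_{j\le m}\cos(h\omega_j)$ is of order one out to lags $|h|\asymp n/m$, so shorter blocks would capture a negligible fraction of the variance.) Escaping this tension requires a variance/orthogonality argument across frequencies for the difference process, which your sketch does not contain. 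Second, the variance computation is off by a factor of $m$. Since $m\var(Q_n)/f^2(0)=(m f^2(0))^{-1}\sum_{j,k\le m}\cov(I_n(\omega_j),I_n(\omega_k))$, you need the off-diagonal Gaussian-product terms and the cumulant remainder to total $o(m)$; per-pair $o(1)$ bounds and $\sum_{j,k\le m}|R_n(j,k)|=o(m^2)$ only yield $m\var(Q_n)/f^2(0)=1+o(m)$, which is vacuous. Establishing the correct bound (in effect $|R_n(j,k)|=O(n^{-1})$ uniformly, together with a genuine argument for the off-diagonal products) under only $\Theta_4<\infty$ is exactly the ``principal technical obstacle'' you defer in your last paragraph; since that is where the entire difficulty of the theorem lives, deferring it leaves the proof incomplete.

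For comparison, the paper never leaves the quadratic-form representation: $Q_n=(2\pi n)^{-1}\sum_{s,t=1}^n a_{n,s-t}X_sX_t$, and \eqref{eq:clt1} is obtained by invoking Theorem~6 of Liu and Wu (2010), a ready-made CLT for such quadratic forms under precisely $\operatorname E|X_0|^4<\infty$ and $\Theta_4<\infty$. All probabilistic difficulties (cumulant control, martingale approximation, Lindeberg-type conditions) are absorbed into that cited theorem; the paper's own work is purely deterministic, namely verifying the coefficient conditions (5.2)--(5.5) via the exact orthogonality identity $\sum_{t=1}^n a_{n,t-j}a_{n,t-l}=(n/2m)\,a_{n,j-l}$ of \autoref{lemma:prodofa} and the trigonometric \autoref{lemma:prodofcos} and \autoref{lemma:prodofsin}. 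If you wish to keep your blocking architecture, the missing pieces you would have to supply amount, in substance, to reproving Liu and Wu's theorem; the economical fix is to verify their coefficient conditions for $a_{n,t}$, which is what the paper does.
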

The proof of~\autoref{t:clt1} is given in Section~\ref{s:p} which follows from a general central limit theorem for weighted sums of periodogram ordinates which in turn
relies on a general central limit theorem for quadratic forms
established by~\citet{liu:wu:2010} (see Theorem~6 therein,
\citet{wu:shao:2007} consider a similar estimator).

Next, we investigate the bias of the estimator $Q_n$. The bias of a general lag window estimator~\eqref{eq:lag-w-general} depends on the curvature properties of the spectral density function and the particular lag windows. The classical results (see Theorem~9.3.3 of \citet{anderson:1971} and Chapter~6 of \citet{priestley:1981}) establish the decay rate of the bias only under the additional assumption that the lag windows are of the scale parameter and use the concept of the characteristic exponent introduced by \citet{parzen:1957}. Since the lag windows of the estimator $Q_n$ are not of the scale parameter form,
we establish the decay rate of the bias of $Q_n$ in the next theorem (the proof of~\autoref{thm:convrate} is given in Section~\ref{s:p}).

\begin{theorem}\label{thm:convrate}
Suppose that $\operatorname EX_0=0$, $\operatorname EX_0^2<\infty$, $\sum_{h=1}^\infty h^r|\gamma(h)|<\infty$ with some $r\ge0$ and \autoref{a:sublinear} holds. Then
\[
	\operatorname EQ_n-f(0)
	=
	\begin{cases}
	o(1)&\text{if}\ r=0,\\
	O((m/n)^r)&\text{if}\ 0<r\le 1,\\
	O(\max\{n^{-1},(m/n)^{\min\{2,r\}}\})&\text{if}\ r>1,
	\end{cases}
\]
where $Q_n$ is defined in~\eqref{e:Qn} and $f(0)$
in~\eqref{eq:spectraldensity}.
\end{theorem}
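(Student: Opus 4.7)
The plan is to compute $\operatorname EQ_n - f(0)$ exactly from the standard formula for the expected periodogram, and then bound the result by splitting the lag range at $|h|=n/m$. Using $\operatorname EI_n(\omega_j) = (2\pi)^{-1}\sum_{|h|<n}(1-|h|/n)\gamma(h)e^{-ih\omega_j}$ together with $f(0) = (2\pi)^{-1}\sum_{h\in\mathbb Z}\gamma(h)$, I would write
\[
\operatorname EQ_n - f(0) = \frac1{2\pi}\sum_{|h|<n}\gamma(h)\lb K_m(h)-1\rb - \frac1{2\pi n}\sum_{|h|<n}|h|\gamma(h) K_m(h) - \frac1{2\pi}\sum_{|h|\geq n}\gamma(h),
\]
where $K_m(h) := m^{-1}\sum_{j=1}^m e^{-ih\omega_j}$. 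Because $\gamma$ is even, $K_m$ can be replaced throughout by its real part $F_m(h) = \sin(\pi hm/n)\cos(\pi h(m+1)/n)/[m\sin(\pi h/n)]$, which is itself even in $h$.

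Next I would record the two kernel estimates that drive the argument. From the closed form and $|\sin x|\geq 2|x|/\pi$ on $[-\pi/2,\pi/2]$, one has $|F_m(h)| \leq \min(1, n/(2m|h|))$ for $1\leq |h|\leq n/2$. Moreover, because $F_m$ is even in $h$, its Taylor expansion at zero has no linear term, and a direct computation gives $|F_m(h) - 1| \leq C(|h|m/n)^2$ for $|h|\leq n/m$. With these in hand, the tail piece is $O(n^{-r})$ by $\sum_{|h|\geq n}|\gamma(h)| \leq n^{-r}\sum_h |h|^r|\gamma(h)|$, while the middle piece is $O(n^{-1})$ when $r\geq 1$ (using $|F_m|\leq 1$ and $\sum_h|h||\gamma(h)|<\infty$) and $O(n^{-r})$ when $0<r<1$ via $|h|/n \leq (|h|/n)^r$ on $|h|<n$.

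For the main piece $\sum\gamma(h)[F_m(h)-1]$ I would split at $|h|=n/m$. On $|h|\leq n/m$ the quadratic kernel bound gives a contribution of order $(m/n)^2\sum_{|h|\leq n/m} h^2|\gamma(h)|$, which is $O((m/n)^2)$ for $r\geq 2$ and, via $h^2\leq |h|^r(n/m)^{2-r}$, is $O((m/n)^r)$ for $0<r<2$. On $|h|>n/m$ I use $|F_m(h)-1|\leq 2$ together with $\sum_{|h|>n/m}|\gamma(h)| \leq (m/n)^r\sum_h|h|^r|\gamma(h)| = O((m/n)^r)$. Summing the three pieces yields exactly the rates stated in the three cases $0<r\leq 1$, $1<r\leq 2$, and $r>2$. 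The borderline case $r=0$ is handled separately by dominated convergence: $K_m(h)\to 1$ for each fixed $h$ while $|K_m(h)|\leq 1$ and $\sum_h|\gamma(h)|<\infty$, so $\operatorname EQ_n - f(0)\to 0$ directly. The main obstacle is the quadratic kernel bound: without exploiting that $F_m$ is even, so that the leading linear part of $K_m(h) - 1$ is purely imaginary and is cancelled by the symmetry of $\gamma$, one would only obtain the suboptimal rate $(m/n)^{\min(1,r)}$, which is insufficient to reach $(m/n)^2$ when $r>1$.
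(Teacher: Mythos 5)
Your proposal is correct and follows essentially the same route as the paper's proof: the identical three-term decomposition of $\operatorname EQ_n-f(0)$ (kernel-minus-one term, Bartlett $|h|/n$ term, and the tail $\sum_{|h|\ge n}\gamma(h)$), the same split of the main sum at lags of order $n/m$ with a quadratic bound on $|F_m(h)-1|$ below the cutoff and the trivial bound above it, and the same elementary estimates for the remaining two pieces. The only substantive difference is that you treat $r=0$ explicitly by dominated convergence, which actually closes a small gap in the paper, whose small-lag estimate yields only $O(1)$ rather than $o(1)$ when $r=0$.
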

Recall that the decay rate of $\delta_{t,2}$ implies a decay rate of the autocovariances. In particular, if $\sum_{h=1}^\infty h^r\delta_{h,2}<\infty$, then $\sum_{h=1}^\infty h^r|\gamma(h)|<\infty$ (see inequality~\eqref{eq:covbound}).

\section{Application to change point analysis} \label{s:cpa}
A problem that has attracted some attention over the past
twenty years is distinguishing between long memory, or long-range
dependence (LRD), and changes in mean. Stationary time series used to model
LRD  exhibit
spurious changes in mean that  Benoit Mandelbrot called  the Joseph effect,
referring to periods of famine and plenty. This is a fundamental
characteristic of all stationary long memory models, see e.g.\
\citet{beranLM:2013}. \citet{berkes:horvath:kokoszka:shao:2006} proposed
a significance test of  the null hypothesis of changes in mean against
a long memory alternative. They review earlier exploratory
research on this problem. Under the null hypothesis, the observations
$X_t$ satisfy
\begin{equation}\label{e:CM}
    X_t = \begin{cases}
        \mu + r_t, & 1 \leq t \leq n^{\star},\\
        \mu + \Delta + r_t, & n^{\star}+1 \leq t \leq N,
    \end{cases}
\end{equation}
where $\mu$ is the base mean level, $\Delta$ the
magnitude of change at an unknown change point
$n^{\star}$ and $\{r_t\}_{t\in\mathbb Z}$ is a  stationary,
weakly dependent series.
\citet{berkes:horvath:kokoszka:shao:2006} characterized the
weak dependence of the $r_t$ by the convergence of their  partial
sums to the Wiener process and summability conditions on the
autocovariances and fourth order cumulants.
\citet{baek:pipiras:2012} pointed out that the  test of
\citet{berkes:horvath:kokoszka:shao:2006} can have
low power and proposed a more powerful test of \eqref{e:CM} based on the
local Whittle estimator (LWE) of the Hurst parameter, $H$. However,
they could justify their procedure only for {\em linear} processes
$\{ r_t \}_{t\in\mathbb Z}$. They relied on  results of \citet{robinson:1995}
valid only for linear time series.
We show in this section that the approach of
\citet{baek:pipiras:2012} is applicable also to nonlinear processes satisfying
the assumptions of \autoref{thm:res1}.
All results of this section are proven in Section \ref{s:p4}.

We begin with the definitions of the Hurst exponent  $H$.

\begin{definition} \label{d:H}Suppose $\{X_t\}_{t\in\mathbb Z}$ is a second order
stationary time series. If its spectral density satisfies
\[
f(\og) = |\og|^{1-2H}g(\og), \ \ \ {\rm for \ some} \ \ 0<H < 1,
\]
where $g$ is continuous on $[-\pi, \pi]$ with $g(\og) > 0$ in a neighborhood
of $\og =0$,  then
$H$ is called the Hurst exponent of the series $\{X_t\}$.
\end{definition}

Our objective is to test

\smallskip

\noindent $H_0$: Model \eqref{e:CM} holds with a series $\{ r_t\}$
satisfying Definition \ref{d:H} with $H=1/2$.

\smallskip

vs.

\smallskip

\noindent $H_A$: The series $\{X_t\}$ satisfies
Definition \ref{d:H} with $H>1/2$.

\smallskip

The test is based on the following estimator.
More background can be found in Chapter~8 of \citet{giraitis:koul:surgailis:2012}.

\begin{definition} \label{def:LWE}
The LWE of the Hurst exponent  $H$  is defined as
\begin{equation} \label{e:bp42}
    \widehat{H} = \argmin_{H \in \Theta} L(H),
\end{equation}
where $\Theta = [\Delta_1, \Delta_2]$
with $0 < \Delta_1 < \Delta_2 < 1$ and
\begin{equation} \label{e:bp43}
L(H) = \log \left(\frac{1}{m}
\sum_{l=1}^m \omega_l^{2H-1} I_{n}(\omega_l) \right) - (2H-1)
\frac{1}{m} \sum_{l=1}^m \log \omega_l.
\end{equation}
\end{definition}

As noted above, currently,  the validity of the test
of \citet{baek:pipiras:2012} is established  only
if the series $\{r_t\}$ in the null hypothesis is a linear process.
We show that the test retains correct asymptotic size  if the  $\{r_t\}$ is
a Bernoulli shift satisfying \autoref{a:eta}.

To construct the test statistic, the first step is to eliminate
the effect of a  potential change point that can
severely bias the  LWE. Consider the residuals
\[
R_t = \begin{cases}
    X_t - \frac{1}{\hat{n}} \sum_{i=1}^{\hat{n}}X_i, & 1 \leq t \leq \hat{n},\\
    X_t - \frac{1}{N - \hat{n}} \sum_{i=\hat{n}+1}^{N}X_i, &\hat{n} + 1 \leq t \leq N,
\end{cases}
\]
where $\hat{n}$ is a  change point estimator. We use the standard
CUSUM estimator
\[
\hat{n} = \min \left\{k: \left| \sum_{j=1}^k X_j
- \frac{k}{N}\sum_{j=1}^N X_j \right| = \max_{1\leq s \leq N}
\left| \sum_{j=1}^s X_j - \frac{s}{N}\sum_{j=1}^N X_j \right| \right\}.
\]
The test statistic is
\begin{equation} \label{e:TS}
    T^{(R)} = 2\sqrt{m} \lp \widehat H^{(R)} - \frac{1}{2} \rp,
\end{equation}
where $\widehat H^{(R)}$ is the LWE based on the residual
process $\{R_t\}$. We will show that  $T^{(R)}$ is asymptotically
standard normal.
The null hypothesis is the rejected
if $T^{(R)} > q_{1-\alpha}$, where $q_{1-\alpha}$ is the $(1-\alpha)$th
quantile of the standard normal distribution.

\begin{theorem} \label{t:TS}
Suppose model  \eqref{e:CM} with the $r_t$
satisfying  $\operatorname Er_0=0$, $\operatorname E|r_0|^4<\infty$
and  \autoref{a:eta}.
In addition, assume that \\
   (i) $n^{\star} = \lfloor N\theta \rfloor$ for some $\theta \in (0,1)$; \\
   (ii) the change in mean level $\Delta = \Delta(N)$ and the change
   point estimator $\hat{n}$ satisfy
    \[
    N\Delta^2 \to \infty, \quad \Delta^2\left| \hat{n} - n^{\star}\right|
    = O_P(1),\quad {\rm as } \ N \to \infty.
    \]
If
\[
\frac{1}{m} + \frac{m^{5} (\log m)^2}{N^4}
+ \frac{m (\log m)^2}{N \Delta^2} \to 0,\quad {\rm as } \ N \to \infty,
\]
then $T^{(R)} \convd {\cal N} (0,1)$.
\end{theorem}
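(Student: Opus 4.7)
The plan is to reduce the asymptotics of $T^{(R)}$ to those of an oracle counterpart $T^{(r)} := 2\sqrt m(\widehat H^{(r)} - 1/2)$, where $\widehat H^{(r)}$ is the LWE from \autoref{def:LWE} applied to the (unobserved) errors $\{r_t\}$. Since $\{r_t\}$ satisfies the hypotheses of \autoref{t:LWE}, $T^{(r)} \convd \mathcal N(0,1)$, and it suffices to prove $T^{(R)} - T^{(r)} = o_P(1)$. The whole argument is run at the level of the periodograms $I_n^{(R)}$ and $I_n^{(r)}$.

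First, a direct computation splitting on the sign of $\hat n - n^*$, and using that an additive constant contributes zero to the DFT at any nonzero Fourier frequency, gives
\[
\sqrt{2\pi N}\,[\mathcal X_n^{(R)}(\omega_l) - \mathcal X_n^{(r)}(\omega_l)] = -(\bar r^{(1)} - \bar r^{(2)}) S_{\hat n}(\omega_l) + \Delta\, T_l,
\]
where $S_k(\omega) = \sum_{t=1}^k e^{-it\omega}$ and $T_l$ is a Dirichlet-type sum on the middle band $[\min(\hat n, n^*)+1, \max(\hat n, n^*)]$, so $|S_k(\omega_l)| = O(\min(k, N/l))$ and $|T_l| = O(\min(|\hat n - n^*|, N/l))$. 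Combined with $|\bar r^{(1)} - \bar r^{(2)}| = O_P(N^{-1/2})$ and $|\hat n - n^*| = O_P(\Delta^{-2})$, this yields, uniformly in $1 \le l \le m$,
\[
|D_l| := |\mathcal X_n^{(R)}(\omega_l) - \mathcal X_n^{(r)}(\omega_l)| = O_P(1/l) + O_P\bigl(\Delta \min(\Delta^{-2}, N/l)/\sqrt N\bigr).
\]

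Using $I_n^{(R)}(\omega_l) - I_n^{(r)}(\omega_l) = 2\operatorname{Re}\bigl(D_l\overline{\mathcal X_n^{(r)}(\omega_l)}\bigr) + |D_l|^2$, I would then bound the two pieces of $\widetilde Q_n^{(R)} - \widetilde Q_n^{(r)} = (\sqrt m\, f(0))^{-1} \sum_l \nu_{l,m} [I_n^{(R)}(\omega_l) - I_n^{(r)}(\omega_l)]$. The quadratic piece is handled by $|\nu_{l,m}| = O(\log m)$ and the summability $\sum_{l=1}^m |D_l|^2 = O_P(1 + m/(N\Delta^2))$. The cross piece is treated via a second-moment computation, exploiting the approximate diagonality $\operatorname E[\mathcal X_n^{(r)}(\omega_l) \overline{\mathcal X_n^{(r)}(\omega_{l'})}] \approx f(0)\mathbf 1\{l = l'\}$ available under \autoref{a:kappa}: the outer $(\bar r^{(1)} - \bar r^{(2)})$-piece becomes negligible through its $O_P(N^{-1/2})$ prefactor, while the middle-band $\Delta T_l$-piece is controlled by the term $m(\log m)^2/(N\Delta^2)$ in the rate condition of \autoref{t:TS}. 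The same bounds also give $Q_n^{(R)}(H) \convP f(0)$ uniformly on $\Theta$. Uniform convergence of $L^{(R)}(H)$ to the limit of $L^{(r)}(H)$ and argmin consistency (as in the proof of \autoref{t:LWE}) then yield $\widehat H^{(R)} \convP 1/2$, and a Taylor expansion of $L^{(R)'}(\widehat H^{(R)}) = 0$ around $H = 1/2$ produces $2\sqrt m(\widehat H^{(R)} - 1/2) = -\widetilde Q_n^{(R)} + o_P(1) = -\widetilde Q_n^{(r)} + o_P(1)$; \autoref{t:Q-t} applied to $\{r_t\}$ completes the argument.

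The main obstacle will be the cross term. A naive Cauchy--Schwarz on $m^{-1/2}\sum_l \nu_{l,m}\, D_l\, \overline{\mathcal X_n^{(r)}(\omega_l)}$ produces only an $O_P(\log m)$ bound, because both $|\nu_{l,m}|$ and the outer part of $|D_l|$ are of order $\log m$ concentrated near $l = 1$. One therefore has to expand the square and use the approximate orthogonality of the DFT at distinct Fourier frequencies together with the $O_P(N^{-1/2})$ prefactor of the outer term and the $\ell^2$-localization of $T_l$ on the range $l \lesssim N\Delta^2$. The combined rate condition in \autoref{t:TS} is calibrated precisely so that this variance calculation closes.
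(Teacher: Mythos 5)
Your proposal is correct and takes essentially the same route as the paper: the paper's own proof is a one-line delegation to the argument of Theorem~3 in \citet{baek:pipiras:2012} combined with \autoref{t:LWE}, and what you write out (reduction to the oracle statistic $T^{(r)}$ via Dirichlet-kernel bounds on the residual-versus-error DFT difference, negligibility under the rate condition $m(\log m)^2/(N\Delta^2)\to0$, then the nonlinear CLT of \autoref{t:Q-t}/\autoref{t:LWE} for $\{r_t\}$) is precisely that delegated argument made explicit. Two minor quibbles that do not affect validity: your DFT identity omits the term $-\Delta\frac{\hat n-n^{\star}}{\hat n}S_{\hat n}(\omega_l)$, which is $O_P\bigl(1/(\sqrt N\Delta\, l)\bigr)=o_P(1/l)$ and hence absorbed into your stated bound for $|D_l|$; and the cross term you flag as the main obstacle actually closes with a first-moment, term-by-term bound using $|S_{\hat n}(\omega_l)|=O(N/l)$, $|\nu_{l,m}|=O(\log m)$ and $\operatorname E|\mathcal X_n^{(r)}(\omega_l)|=O(1)$, yielding $O_P\bigl((\log m)^2/\sqrt m\bigr)+O_P\bigl((m(\log m)^2/(N\Delta^2))^{1/2}\bigr)=o_P(1)$, so the orthogonality/second-moment expansion you anticipate is not needed.
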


The proof of \autoref{t:TS} follows the lines of  the proof  Theorem 3,
of \citet{baek:pipiras:2012} as long as the following result holds
and is applied to $X_t=r_t$.

\begin{theorem} \label{t:LWE}
Suppose $\operatorname EX_0=0$, $\operatorname E|X_0|^4<\infty$
and \autoref{a:eta} holds. If
\begin{equation} \label{e:m}
\frac{1}{m} + \frac{m^{5} (\log m)^2}{n^4} \to 0, \quad {\rm as } \
n \to \infty,
\end{equation}
then
\[
    2 \sqrt{m} \lp \widehat H - \frac{1}{2} \rp \convd {\cal N}(0,1),
    \quad {\rm as } \ n \to \infty.
\]
\end{theorem}

The  proof of~\autoref{t:LWE}
uses a similar argument as the proof of Theorem 2
of \citet{robinson:1995}.
One can reduce the argument to showing that \eqref{e:res1}
in \autoref{thm:res1} holds and
\begin{equation} \label{e:wtQn}
    \widetilde Q_n := \frac{1}{\sqrt{m}} \sum_{l = 1}^m \nu_{l,m}
    \lp \frac{I_n(\omega_l)}{f(0)} -1 \rp \convd {\cal N}(0,1), \quad
    {\rm as } \
     n \to \infty,
\end{equation}
where
\begin{equation}\label{eq:nu}
	\nu_{l,m}
	=\log l-\frac{1}{m} \sum_{j=1}^m \log j.
\end{equation}
The specific form of the weights $\nu_{l,m}$ follows from the
local Whittle likelihood \eqref{e:bp43}. We see that the
proof of \autoref{t:TS} can be reduced to known arguments,
\autoref{thm:res1} and
following result proven in  Section \ref{s:p4}.

\begin{theorem}\label{t:Q-t}
Convergence \eqref{e:wtQn} holds under the assumptions of \autoref{t:LWE}.
\end{theorem}

\section{A small simulation study} \label{s:sim}
Finite sample normality of spectral density estimators has been 
examined in many simulations studies, we refer to 
\citet{das:2021} for a study that uses a symmetrized version of $Q_n$, 
among many other estimators.
We focus here on simulations related to the change point 
problem explained in Section \ref{s:cpa}.
We examine the
finite sample behavior of the test statistic $T^{(R)}$ defined in
\eqref{e:TS} and the test based on it.
We consider two non-linear processes
used in modeling returns  on  financial assets. These models
are not considered by \citet{baek:pipiras:2012} and are not covered
by their theory.

\begin{definition}
The series $\{r_t\}$ is said to be a GARCH$(1,1)$
process if it satisfies the equations
\begin{align} \label{e:g11}
    r_t = \sigma_t \epsilon_t,\ \ \
\sigma_t^2 = \alpha_0 + \alpha_1 r_{t-1}^2 + \beta_1 \sigma_{t-1}^2,
\end{align}
where $\alpha_0 > 0$, $\alpha_1, \beta_1 \geq 0$,
$\alpha_1 + \beta_1 < 1$ and the
$\epsilon_t$ are iid random variables with mean zero, unit variance and
finite fourth moment.
\end{definition}

The condition $\alpha_1+\beta_1<1$ implies that the GARCH$(1,1)$ process has a second-order stationary solution (which is also strictly stationary) with a representation of form~\eqref{eq:defX_t}. A necessary and sufficient condition for the existence of the fourth moments of the GARCH$(1,1)$ process is given by
\begin{equation}\label{eq:garchm}
	\operatorname E|\beta_1+\alpha_1\epsilon_0^2|^2<1
\end{equation}
(we refer to \citet{francq:zakoian:2019} for an extensive review of
GARCH processes). \citet{wu:min:2005} established
(see Proposition~3 therein) that condition~\eqref{eq:garchm}
also implies that the GARCH$(1,1)$ process satisfies the
geometric-moment contraction (GMC) condition, i.e., $
\operatorname E|r_n-r_n^{(n)}|^4\le C\rho^n$,
for some $C<\infty$ and $\rho\in(0,1)$, where $r_n^{(n)}$
is defined by~\eqref{eq:Xtm} for $n\ge1$. Hence,
\autoref{a:eta} is satisfied provided that condition
\eqref{eq:garchm} holds.

As the next example, we consider the simplest stochastic volatility model.

\begin{definition} \label{d:SV}
The stochastic volatility (SV) model is given by
\begin{align} \label{e:sv}
    r_t = \exp(x_t/2) \epsilon_t,\ \ \
    x_t = \alpha + \phi x_{t-1} + w_t,
\end{align}
where $\alpha \in \mathbb{R}$,
$|\phi|<1$, $w_t \stackrel{iid}{\sim}\mathcal N(0,\sigma_w^2)$,
$\epsilon_t \stackrel{iid}{\sim} \mathcal N(0,1)$ and $\{w_t\}$
and $\{\epsilon_t\}$ are mutually independent.
\end{definition}

\begin{proposition}\label{p:ex}
The $r_t$ specified  in \autoref{d:SV}
satisfy \autoref{a:eta}.
\end{proposition}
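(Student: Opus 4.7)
The plan is to verify \autoref{a:kappa} directly by exploiting the fact that the latent log-volatility $\{x_t\}$ is an AR$(1)$ process, so it depends on remote innovations at a geometric rate---much faster than any polynomial rate, making the verification conceptually easy despite the nonlinear structure.

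First I would cast $r_t$ as a Bernoulli shift. With paired innovations $\varepsilon_t = (\epsilon_t, w_t)$, which are iid, the stationary solution of the AR$(1)$ recursion is $x_t = \alpha/(1-\phi) + \sum_{j \ge 0} \phi^j w_{t-j}$, so $r_t = \exp(x_t/2)\epsilon_t = g(\varepsilon_t, \varepsilon_{t-1}, \ldots)$ for a measurable $g$. This gives representation \eqref{eq:defX_t}. The coupled version \eqref{eq:Xtm} keeps $\varepsilon_t, \ldots, \varepsilon_{t-m+1}$ and replaces the rest by independent copies, so $r_t^{(m)} = \exp(x_t^{(m)}/2)\epsilon_t$ and only the $w$'s of lag at least $m$ are replaced. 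In particular,
\[
x_t - x_t^{(m)} = \sum_{j=m}^{\infty} \phi^j (w_{t-j} - w_{t-j}') \sim \mathcal N\bigl(0,\, 2\sigma_w^2 \phi^{2m}/(1-\phi^2)\bigr),
\]
which is the crucial geometrically small Gaussian increment.

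Next I would pull out the common multiplicative factor $\epsilon_t$, which is independent of both $x_t$ and $x_t^{(m)}$, so
\[
\|r_t - r_t^{(m)}\|_4^4 = \operatorname E\epsilon_0^4 \cdot \operatorname E\bigl|\exp(x_t/2) - \exp(x_t^{(m)}/2)\bigr|^4.
\]
Using the elementary inequality $|e^a - e^b| \le |a-b|(e^a + e^b)$ with $a = x_t/2$ and $b = x_t^{(m)}/2$, and then applying the Cauchy--Schwarz inequality, the right-hand factor is bounded by a constant times
\[
\bigl\{\operatorname E(x_t - x_t^{(m)})^8\bigr\}^{1/2} \bigl\{\operatorname E(\exp(x_t/2) + \exp(x_t^{(m)}/2))^8\bigr\}^{1/2}.
\]

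Finally I would estimate the two factors. The first is the 8th moment of a centered Gaussian whose variance is of order $\phi^{2m}$, hence $O(\phi^{8m})$. For the second, both $x_t$ and $x_t^{(m)}$ are Gaussian with the same stationary mean $\alpha/(1-\phi)$ and variance $\sigma_w^2/(1-\phi^2)$, so $\exp(x_t/2)$ and $\exp(x_t^{(m)}/2)$ are log-normal with finite moments of every order, and the second factor is bounded by a constant independent of $m$. Combining these estimates yields $\|r_t - r_t^{(m)}\|_4 = O(|\phi|^m)$, which is $O(m^{-\kappa})$ for every $\kappa > 0$; in particular the required $\kappa > 3$ holds. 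The only real subtlety is confirming that all relevant moments of the log-normal factors are finite, but this is immediate from $|\phi| < 1$, which keeps the stationary variance of $x_t$ bounded; once that is in hand the geometric decay of $\|x_t - x_t^{(m)}\|_q$ carries the argument through.
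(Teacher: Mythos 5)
Your proof is correct, and it shares the paper's overall skeleton: represent $r_t$ as a Bernoulli shift in the paired innovations $(\epsilon_t,w_t)$, factor out the independent multiplier $\epsilon_t$ so that $\|r_t-r_t^{(m)}\|_4=\|\epsilon_0\|_4\,\|\exp(x_t/2)-\exp(x_t^{(m)}/2)\|_4$, and exploit the geometric decay of the AR$(1)$ tail $\sum_{j\ge m}\phi^j w_{t-j}$ to conclude $O(|\phi|^m)=O(m^{-\kappa})$ for every $\kappa>0$. Where you genuinely diverge is in the key estimation step. The paper computes $\operatorname E|u_t-u_t^{(m)}|^4$ \emph{exactly}: it expands $(a-b)^4$ binomially, uses the independence of the retained part $\zeta_{m-1,t}$ from the two exchangeable tails $\xi_{m,t},\xi_{m,t}'$ together with the log-normal moment formula $\operatorname E[e^{\mu+\sigma Z}]=e^{\mu+\sigma^2/2}$, and then needs a Taylor expansion in which the zeroth- and first-order terms cancel (the coefficients $2-8+6=0$ and $12-24+12=0$) to reveal the rate $O(\phi^{4m})$. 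You instead avoid any exact computation and any cancellation: the mean value theorem bound $|e^a-e^b|\le|a-b|(e^a+e^b)$ extracts the small factor $|x_t-x_t^{(m)}|/2$ up front, and Cauchy--Schwarz separates it from the log-normal factor, which is bounded uniformly in $m$ since $x_t$ and $x_t^{(m)}$ share the stationary $\mathcal N\bigl(\alpha/(1-\phi),\sigma_w^2/(1-\phi^2)\bigr)$ law; the Gaussian eighth moment of $x_t-x_t^{(m)}$ then delivers the same rate $O(\phi^{4m})$ for the fourth moment. Your route is more elementary and more robust: it never uses Gaussianity of the tail beyond finite exponential moments, so it would extend verbatim to non-Gaussian $w_t$ possessing a moment generating function near the origin, whereas the paper's closed-form computation is tied to normality (though it has the minor virtue of producing the exact constant). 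Both arguments are complete; the only point worth stating explicitly in your write-up is the uniform-in-$m$ bound on the log-normal factor, which, as you note, is immediate from stationarity and $|\phi|<1$.
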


We computed the empirical density
functions of the normalized LWE based on $\{r_t\}$
for both models, which are displayed in Figures \ref{f:g11} and \ref{f:sv}
presented in the Supporting Information.
These figures show that the asymptotic normality
established in \autoref{t:LWE}  holds well in finite samples.
We also evaluated the empirical size  of the test based on
\autoref{t:TS}. The results reported in Tables \ref{tb:g11} and \ref{tb:sv}
in the Supporting Information show that these sizes are
comparable with those obtained for linear processes.

We have applied the test of Section \ref{s:cpa} to returns 
and absolute returns on the S\&P500 index over a period that 
includeds the COVID pandemic.  
For any \[m = n^p, \ \ \ p \in \{0.60. 0.65, 0.70, 0.75\} < 0.8, \]
 c.f. 
condition \eqref{e:m}, the test accept $H_0$ for returns and rejects 
$H_0$ for absolute returns. It appears that the absolute returns
(that model volatility), are better explained by a long memory 
model. This agrees with many previous studies going back 
to the foundational  work of \citet{ding:1993}; \citet{dalla:2015}
provides many related references.

\section{Proofs of the results of Sections \ref{s:est}
and \ref{s:cpa} } \label{s:p}

We begin with a few  auxiliary lemmas that isolate  technical arguments
from our main proofs.

\subsection{Auxiliary lemmas} \label{ss:aux}
\begin{lemma}\label{lemma:prodofcos}
For $x,y\in\mathbb R$, $k,r\in\mathbb Z$, and $n\ge1$,
\begin{multline*}
	\sum_{t=1}^n\cos((t-x)\omega_k)\cos((t-y)\omega_r)\\
	=
	\begin{cases}
	\frac n2[\cos(x\omega_k-y\omega_r)+\cos(x\omega_k+y\omega_r)]&\text{if}\ k+r\in n\mathbb Z\ \text{and}\ k-r\in n\mathbb Z;\\
	\frac n2\cos(x\omega_k-y\omega_r)&\text{if}\ k+r\in\mathbb Z\setminus n\mathbb Z\ \text{and}\ k-r\in n\mathbb Z;\\
	\frac n2\cos(x\omega_k+y\omega_r)&\text{if}\ k+r\in n\mathbb Z\ \text{and}\ k-r\in\mathbb Z\setminus n\mathbb Z;\\
	0&\text{if}\ k+r\in\mathbb Z\setminus n\mathbb Z\ \text{and}\ k-r\in \mathbb Z\setminus n\mathbb Z.
	\end{cases}
\end{multline*}
\end{lemma}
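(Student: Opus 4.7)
The identity is purely trigonometric, so the plan is to reduce the product of cosines to a sum, then invoke the orthogonality of complex exponentials at Fourier frequencies. First I would apply the product-to-sum formula $\cos A \cos B = \tfrac{1}{2}[\cos(A-B)+\cos(A+B)]$ with $A=(t-x)\omega_k$ and $B=(t-y)\omega_r$, yielding
\[
\cos((t-x)\omega_k)\cos((t-y)\omega_r)
=\tfrac{1}{2}\cos\!\bigl(t(\omega_k-\omega_r)-(x\omega_k-y\omega_r)\bigr)
+\tfrac{1}{2}\cos\!\bigl(t(\omega_k+\omega_r)-(x\omega_k+y\omega_r)\bigr).
\]
This splits the desired sum into two pieces, one controlled by $k-r$ and the other by $k+r$.

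Next I would evaluate sums of the form $S(m,\phi):=\sum_{t=1}^n\cos(t\cdot 2\pi m/n-\phi)$ for $m\in\mathbb Z$ and $\phi\in\mathbb R$. Writing $S(m,\phi)=\mathrm{Re}\bigl(e^{-i\phi}\sum_{t=1}^n e^{it\cdot 2\pi m/n}\bigr)$ and using the standard geometric-series/orthogonality identity
\[
\sum_{t=1}^n e^{it\cdot 2\pi m/n}
=\begin{cases} n & m\in n\mathbb Z,\\ 0 & m\in\mathbb Z\setminus n\mathbb Z,\end{cases}
\]
I obtain $S(m,\phi)=n\cos\phi$ when $m\in n\mathbb Z$ and $S(m,\phi)=0$ otherwise. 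Applying this with $m=k-r$, $\phi=x\omega_k-y\omega_r$ for the first half, and with $m=k+r$, $\phi=x\omega_k+y\omega_r$ for the second half, gives the contribution of each summand.

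Combining the two pieces yields the four cases of the lemma: if both $k-r\in n\mathbb Z$ and $k+r\in n\mathbb Z$ the sum equals $\tfrac{n}{2}[\cos(x\omega_k-y\omega_r)+\cos(x\omega_k+y\omega_r)]$; if only one of them lies in $n\mathbb Z$, only the corresponding term survives; and if neither does, the sum vanishes. There is no real obstacle here, since the argument is elementary; the only care required is in bookkeeping of signs (using $\cos(-\theta)=\cos\theta$) and in tracking which of the two Fourier-frequency conditions activates which cosine, so that the conclusion matches the statement exactly.
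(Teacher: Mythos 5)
Your proof is correct and takes essentially the same route as the paper: both arguments reduce the product of cosines to complex exponentials and invoke the geometric-series identity $\sum_{t=1}^n e^{2\pi i m t/n}=n$ for $m\in n\mathbb Z$ and $0$ otherwise. The only cosmetic difference is that you apply the product-to-sum formula first and take real parts, whereas the paper expands each cosine as $(e^{ix}+e^{-ix})/2$ and groups the four resulting sums into conjugate pairs.
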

\begin{proof}
Using the fact that $\cos x=(e^{ix}+e^{-ix})/2$ for $x\in\mathbb R$,
\begin{align*}
	&4\sum_{t=1}^n\cos((t-x)\omega_k)\cos((t-y)\omega_r)\\
	 &=\sum_{t=1}^n(e^{i(t-x)\omega_k}+e^{-i(t-x)\omega_k})(e^{i(t-y)\omega_r}+e^{-i(t-y)\omega_r})\\
	&=e^{-i(x\omega_k+y\omega_r)}\sum_{t=1}^n(e^{2\pi i/n})^{(k+r)t}
	+e^{-i(x\omega_k-y\omega_r)}\sum_{t=1}^n(e^{2\pi i/n})^{(k-r)t}\\
	&\ \ \ +e^{i(x\omega_k-y\omega_r)}\sum_{t=1}^n(e^{2\pi i/n})^{-(k-r)t}
	+e^{i(x\omega_k+y\omega_r)}\sum_{t=1}^n(e^{2\pi i/n})^{-(k+r)t}.
\end{align*}
We have that
\begin{multline*}
	e^{-i(x\omega_k+y\omega_r)}\sum_{t=1}^n(e^{2\pi i/n})^{(k+r)t}
	+e^{i(x\omega_k+y\omega_r)}\sum_{t=1}^n(e^{2\pi i/n})^{-(k+r)t}
	\\
	=
	\begin{cases}
	2n\cos(x\omega_k+y\omega_r)&\text{if}\ k+r\in n\mathbb Z;\\
	0&\text{if}\ k+r\notin n\mathbb Z.
	\end{cases}
\end{multline*}
Similarly,
\begin{multline*}
	e^{-i(x\omega_k-y\omega_r)}\sum_{t=1}^n(e^{2\pi i/n})^{(k-r)t}
	+e^{i(x\omega_k-y\omega_r)}\sum_{t=1}^n(e^{2\pi i/n})^{-(k-r)t}
	\\
	=
	\begin{cases}
	2n\cos(x\omega_k-y\omega_r)&\text{if}\ k-r\in n\mathbb Z;\\
	0&\text{if}\ k-r\notin n\mathbb Z.
	\end{cases}
\end{multline*}
The proof is complete.
\end{proof}

\begin{lemma}\label{lemma:prodofsin}
For $x,y\in\mathbb R$, $k,r\in\mathbb Z$, and $n\ge1$,
\begin{multline*}
	\sum_{t=1}^n\sin((t-x)\omega_k)\sin((t-y)\omega_r)\\
	=
	\begin{cases}
	\frac n2[\cos(x\omega_k-y\omega_r)-\cos(x\omega_k+y\omega_r)]&\text{if}\ k+r\in n\mathbb Z\ \text{and}\ k-r\in n\mathbb Z;\\
	\frac n2\cos(x\omega_k-y\omega_r)&\text{if}\ k+r\in\mathbb Z\setminus n\mathbb Z\ \text{and}\ k-r\in n\mathbb Z;\\
	-\frac n2\cos(x\omega_k+y\omega_r)&\text{if}\ k+r\in n\mathbb Z\ \text{and}\ k-r\in\mathbb Z\setminus n\mathbb Z;\\
	0&\text{if}\ k+r\in\mathbb Z\setminus n\mathbb Z\ \text{and}\ k-r\in \mathbb Z\setminus n\mathbb Z.
	\end{cases}
\end{multline*}
\end{lemma}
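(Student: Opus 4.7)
The plan is to mirror the proof of Lemma~\ref{lemma:prodofcos} almost verbatim, the only difference being the sign flip that distinguishes the product-to-sum identity for sines from the one for cosines. Concretely, I will use
\[
	\sin a\sin b = \tfrac12\bigl[\cos(a-b) - \cos(a+b)\bigr],
\]
applied to $a = (t-x)\omega_k$ and $b = (t-y)\omega_r$, which gives
\[
	\sin((t-x)\omega_k)\sin((t-y)\omega_r) = \tfrac12\bigl[\cos(t\omega_{k-r} - (x\omega_k - y\omega_r)) - \cos(t\omega_{k+r} - (x\omega_k + y\omega_r))\bigr].
\]
Summing over $t = 1,\ldots,n$ splits the expression into two cosine sums with arithmetic progressions in $t$, exactly of the form already handled in the proof of Lemma~\ref{lemma:prodofcos}.

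Next, I will evaluate each of the two cosine sums with the same geometric-series computation as before. Writing $\cos u = (e^{iu} + e^{-iu})/2$, each sum reduces to two copies of $\sum_{t=1}^n (e^{2\pi i/n})^{\pm \ell t}$ with $\ell \in \{k+r, k-r\}$, and this geometric sum equals $n$ when $\ell \in n\mathbb{Z}$ and $0$ otherwise. Equivalently, $\sum_{t=1}^n \cos(t\omega_\ell - \phi) = n\cos\phi$ if $\ell \in n\mathbb{Z}$ (using that cosine is even and $2\pi$-periodic), and $0$ otherwise. Applying this with $(\ell,\phi) = (k-r,\, x\omega_k - y\omega_r)$ for the first cosine and $(\ell,\phi) = (k+r,\, x\omega_k + y\omega_r)$ for the second yields the desired case analysis.

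Assembling: the $\cos(x\omega_k - y\omega_r)$ contribution equals $+\tfrac{n}{2}\cos(x\omega_k - y\omega_r)$ when $k-r \in n\mathbb{Z}$ and vanishes otherwise, while the $-\cos(x\omega_k + y\omega_r)$ contribution equals $-\tfrac{n}{2}\cos(x\omega_k + y\omega_r)$ when $k+r \in n\mathbb{Z}$ and vanishes otherwise. The four cases in the statement correspond exactly to the four combinations of these two on/off indicators. In particular, only the minus sign in front of $\cos(x\omega_k + y\omega_r)$ differs from Lemma~\ref{lemma:prodofcos}.

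There is no serious obstacle here — the argument is a bookkeeping exercise. The only point requiring a bit of care is the sign: because $\sin$-expansion carries a factor $(2i)^{-2} = -1/4$ rather than the $(2)^{-2} = 1/4$ appearing for cosines, the $(k+r)$-sum enters with the opposite sign, which is precisely what produces the $-\cos(x\omega_k + y\omega_r)$ in the first case and the stand-alone $-\tfrac{n}{2}\cos(x\omega_k + y\omega_r)$ in the third case of the lemma. Everything else is identical to the previous lemma, so I would simply note this parallel and perform the expansion in full once more for completeness.
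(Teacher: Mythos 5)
Your proposal is correct and follows essentially the same route the paper intends: the paper omits this proof as ``analogous to Lemma~\ref{lemma:prodofcos}'', whose argument reduces the product to geometric sums $\sum_{t=1}^n(e^{2\pi i/n})^{\pm(k\pm r)t}$ that equal $n$ or $0$ according to whether $k\pm r\in n\mathbb Z$, exactly as you do. Your sign bookkeeping (the $(2i)^{-2}=-1/4$ factor, equivalently the minus in $\sin a\sin b=\tfrac12[\cos(a-b)-\cos(a+b)]$) is the only genuine difference from the cosine case, and you handle it correctly.
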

The proof of~\autoref{lemma:prodofsin} is analogous to the proof of \autoref{lemma:prodofcos} and thus is omitted.

\begin{lemma} \label{l:2}
    Let $\nu_{l,m} = \log l - \frac{1}{m} \sum_{j=1}^m \log j$. Then,
\begin{equation} \label{e:nu1}
    \lim_{m \to \infty}\frac{1}{m} \sum_{k=1}^m \nu_{k,m}^2 =1,
\end{equation}
\begin{equation} \label{e:nu2}
    \max_{1 \leq k \leq m} |\nu_{k,m}| = O(\log m).
\end{equation}
\end{lemma}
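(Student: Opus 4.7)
The plan is to reduce both claims to two standard Riemann-sum (or Euler-Maclaurin / Stirling) asymptotics for $\bar L_m := \frac{1}{m}\sum_{j=1}^m \log j$ and for $\frac{1}{m}\sum_{k=1}^m (\log k)^2$. Once these are in hand, \eqref{e:nu1} follows from the usual variance identity and \eqref{e:nu2} from a crude triangle-inequality bound.

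\textbf{Step 1 (asymptotics of the mean).} Using the substitution $k = mx$ in the Riemann sum for $\int_0^1 \log x\, dx = -1$, or equivalently Stirling's formula $\log(m!) = m\log m - m + O(\log m)$, I would show
\[
\bar L_m \;=\; \log m \;+\; \frac{1}{m}\sum_{k=1}^m \log(k/m) \;=\; \log m - 1 + o(1), \qquad m\to\infty.
\]
The only subtlety is the logarithmic integrability near $0$; this is handled by isolating the first few terms $k=1,2,\dots$ and using $\int_0^{1/m}|\log x|\,dx = O((\log m)/m)$.

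\textbf{Step 2 (asymptotics of the second moment).} By the same device,
\[
\frac{1}{m}\sum_{k=1}^m (\log k)^2 \;=\; (\log m)^2 + 2(\log m)\cdot\frac{1}{m}\sum_{k=1}^m \log(k/m) + \frac{1}{m}\sum_{k=1}^m (\log(k/m))^2.
\]
The last Riemann sum converges to $\int_0^1 (\log x)^2\,dx = 2$, and by Step 1 the middle term equals $-2\log m + o(\log m)$. Hence
\[
\frac{1}{m}\sum_{k=1}^m (\log k)^2 \;=\; (\log m)^2 - 2\log m + 2 + o(1).
\]

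\textbf{Step 3 (conclusion for \eqref{e:nu1}).} Since $\nu_{k,m} = \log k - \bar L_m$ is the centering of $\log k$ about its empirical mean, the variance identity gives
\[
\frac{1}{m}\sum_{k=1}^m \nu_{k,m}^2 \;=\; \frac{1}{m}\sum_{k=1}^m (\log k)^2 \;-\; \bar L_m^2.
\]
From Step 1, $\bar L_m^2 = (\log m)^2 - 2\log m + 1 + o(1)$, so Step 2 yields $\frac{1}{m}\sum_{k=1}^m \nu_{k,m}^2 = (2-1) + o(1) = 1 + o(1)$.

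\textbf{Step 4 (bound for \eqref{e:nu2}).} For $1\le k\le m$, $|\log k|\le \log m$, and by Step 1 $|\bar L_m| = \log m + O(1)$, so
\[
\max_{1\le k\le m}|\nu_{k,m}| \;\le\; \log m + |\bar L_m| \;=\; 2\log m + O(1) \;=\; O(\log m).
\]

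There is no real obstacle; the only care needed is the endpoint behaviour of $\log$ at $0$ when passing from a Riemann sum on $(0,1]$ to the integral, which is a standard $O((\log m)/m)$ remainder.
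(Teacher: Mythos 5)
Your proposal is correct in substance but takes a genuinely different route from the paper's. Writing $\bar L_m = \frac1m\sum_{j=1}^m\log j$ as you do, the paper centers first: since $\nu_{k,m}=\log(k/m)+(\log m-\bar L_m)$ and $\log m-\bar L_m\to1$, it replaces $\nu_{k,m}$ by $\log(k/m)+1$ and evaluates the single Riemann sum $\frac1m\sum_{k=1}^m(\log(k/m)+1)^2\to\int_0^1(\log x+1)^2\,dx=1$; for \eqref{e:nu2} it simply cites Lemma~2 of \citet{robinson:1995}. You instead expand through the variance identity and compute the first and second empirical moments of $\log k$ separately via Stirling, and your Step~4 proves \eqref{e:nu2} by a two-line triangle inequality rather than by citation, which makes the lemma self-contained --- a genuine gain. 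The trade-off is that your decomposition forces a cancellation between terms of size $(\log m)^2$ and $\log m$, so the error bookkeeping must be sharper than what you state: from Step~1 as written, $\bar L_m=\log m-1+o(1)$ only yields $\bar L_m^2=(\log m)^2-2\log m+1+o(\log m)$, not $+\,o(1)$, and likewise a middle term in Step~2 known only to $o(\log m)$ determines the total only to $o(\log m)$, which is useless after the $(\log m)^2$ terms cancel in Step~3. What rescues you is that the tool you invoke actually gives more: Stirling's $\log(m!)=m\log m-m+O(\log m)$ yields $\bar L_m=\log m-1+O((\log m)/m)$, a remainder that survives multiplication by $\log m$; carry this $O((\log m)/m)$ explicitly through Steps~2 and~3 and the argument closes. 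The paper's centering-first Riemann sum avoids this delicate cancellation altogether, which is exactly what its one-line proof of \eqref{e:nu1} buys.
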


\begin{proof}
Note that
\[
\frac{1}{m}\sum_{k=1}^m \nu_{k,m}^2 \sim \frac{1}{m} \sum_{k=1}^m \lp \log \lp \frac{k}{m}\rp +1 \rp^2 \to \int_0^1 (\log x + 1)^2 dx = 1.
\]
Moreover, \eqref{e:nu2} follows directly from Lemma 2
in \citet{robinson:1995}. The proof is complete.
\end{proof}

\subsection{Central limit theorem for weighted sums of the periodogram ordinates}
Suppose that $\kappa_m=(\kappa_{1,m},\ldots,\kappa_{m,m})\in\mathbb R^m$ with $m\ge1$ and consider a weighted sum of the periodogram ordinates
\begin{equation}\label{eq:w_sum}
	W_n
	=\frac1{\|\kappa_m\|_2^2}\sum_{j=1}^m\kappa_{j,m}I_n(\omega_j),
\end{equation}
where $\|\kappa_m\|_p=(|\kappa_{1,m}|^p+\ldots+|\kappa_{m,m}|^p)^{1/p}$ for $p\ge1$ and the periodogram ordinates are defined in~\eqref{eq:dftandper}. Observe that $Q_n$ given in~\eqref{eq:Qn_w} and $\tilde Q_n$ given in~\eqref{e:wtQn} can be expressed as special cases of \eqref{eq:w_sum}. The next theorem establishes a central limit theorem for the appropriately standardized weighted sum of the periodogram ordinates.
\begin{theorem}\label{thm:generalCLT}
Assume that $\operatorname EX_0=0$, $\operatorname E|X_0|^4<\infty$, $\Theta_4<\infty$ with $\Theta_4$ defined by~\eqref{eq:Thetamp}, and \autoref{a:sublinear} holds.
Suppose that $\kappa_m=(\kappa_{1,m},\ldots,\kappa_{m,m})\in\mathbb R^m$ with $m\ge1$ such that
\begin{equation}\label{eq:assumption_kappa}
	\frac{\|\kappa_m\|_4}{\|\kappa_m\|_2}\to0
	\quad\text{and}\quad
	\frac{\sum_{j=1}^m\kappa_{j,m}^2\sin^2(\omega_j/2)}{\|\kappa_m\|_2^2}\to0
	\quad\text{as}\quad m\to\infty,
\end{equation}
where $\omega_j=2\pi j/n$ are the Fourier frequencies with $1\le j\le m$. Then
\[
	\frac{\|\kappa_m\|_2}{f(0)}(W_n-\operatorname EW_n)
	\xrightarrow d\mathcal N(0,1)
\]
as $n\to\infty$, where $W_n$ and $f(0)$ are defined in~\eqref{eq:w_sum} and~\eqref{eq:spectraldensity} respectively.
\end{theorem}
The proof of \autoref{thm:generalCLT} is based on a general central limit theorem for quadratic forms established by \citet{liu:wu:2010} (Thereom~6 therein) which we state here for the sake of convenience. $\mathbf 1_A:\mathbb R\to\{0,1\}$ denotes the indicator function of a set $A\subset\mathbb R$.

\begin{theorem}[Theorem~6 of \citet{liu:wu:2010}]\label{thm:liu:wu:2010:6}
Let $a_{n,j}=b_{n,j}e^{ij\lambda}$, where $\lambda\in\mathbb R$, $b_{n,j}\in\mathbb R$ with $b_{n,j}=b_{n,-j}$, and
\[
	T_n
	=\sum_{1\le j,j'\le n}a_{n,j-j'}X_jX_{j'}
	\quad\text{and}\quad
	\sigma_n^2
	=(1+\mathbf 1_{\mathbb Z}(\lambda/\pi))\sum_{k=1}^n\sum_{t=1}^nb_{n,t-k}^2,
\]
Assume that $\operatorname EX_0=0$, $\operatorname E|X_0|^4<\infty$, $\Theta_4<\infty$ with $\Theta_4$ defined by~\eqref{eq:Thetamp}, and
\begin{align}
	\max_{0\le t\le n}b_{n,t}^2
	&=o(\varsigma_n^2),
	\quad\text{where}\
	\varsigma_n^2
	=\sum_{t=1}^nb_{n,t}^2;\label{eq:liu:wu:2010:5.2}\\
	n\varsigma_n^2
	&=O(\sigma_n^2);\label{eq:liu:wu:2010:5.3}\\
	\sum_{k=1}^n\sum_{t=1}^{k-1}\Bigl|\sum_{j=1+k}^na_{n,k-j}a_{n,t-j}\Bigr|^2
	&=o(\varsigma_n^4);\label{eq:liu:wu:2010:5.4}\\
	\sum_{k=1}^n|b_{n,k}-b_{n,k-1}|^2
	&=o(\varsigma_n^2).\label{eq:liu:wu:2010:5.5}
\end{align}
Then for $0\le\lambda<2\pi$, $\sigma_n^{-1}(T_n-\operatorname ET_n)\xrightarrow dN(0,4\pi^2f^2(\lambda))$.
\end{theorem}
We are now ready to prove \autoref{thm:generalCLT}.
\begin{proof}[Proof of \autoref{thm:generalCLT}]
We use \autoref{thm:liu:wu:2010:6} with $\lambda=0$ to prove \autoref{thm:generalCLT}. $W_n$ is a quadratic form given by
\[
	W_n
=\frac1{\|\kappa_m\|_2^2}\sum_{j=1}^m\kappa_{j,m}\cdot\frac1{2\pi n}\sum_{s,t=1}^nX_sX_t\cos((s-t)\omega_j)
	=\sum_{s,t=1}^na_{n,s-t}X_sX_t,
\]
where
\[
	a_{n,t}
	=\frac1{2\pi n\|\kappa_m\|_2^2}\sum_{j=1}^m\kappa_{j,m}\cos(t\omega_j)
\]
for $n\ge1$ and $|t|<n$.

For sufficiently large values of $n$ and $k\in\mathbb Z$,
\begin{equation}\label{eq:sumofa^2}
	\sum_{t=1}^na_{n,t-k}^2
=\frac1{4\pi^2n^2\|\kappa_m\|_2^4}\sum_{t=1}^n\Bigl|\sum_{j=1}^m\kappa_{j,m}\cos((t-k)\omega_j)\Bigr|^2
	=\frac1{8\pi^2n\|\kappa_m\|_2^2}
\end{equation}
since
\begin{align*}
	\sum_{t=1}^n\Bigl|\sum_{j=1}^m\kappa_{j,m}\cos((t-k)\omega_j)\Bigr|^2
&=\sum_{j=1}^m\sum_{l=1}^m\kappa_{j,m}\kappa_{l,m}\sum_{t=1}^n\cos((t-k)\omega_j)\cos((t-k)\omega_l)\\
	&=\frac n2\|\kappa_m\|_2^2
\end{align*}
using \autoref{lemma:prodofcos} as $j+l\in \mathbb Z\setminus n\mathbb Z$ for sufficiently large values of $n$ due to \autoref{a:sublinear}.

Using~\eqref{eq:sumofa^2},
\[
	\sigma_n^2
	=2\sum_{k=1}^n\sum_{t=1}^na_{n,t-k}^2
	=\frac1{4\pi^2\|\kappa_m\|_2^2}
	\quad\text{and}\quad
	\varsigma_n^2
	=\sum_{t=1}^na_{n,t}^2
	=\frac1{8\pi^2n\|\kappa_m\|_2^2}
\]
for sufficiently large values of $n$.

Using the Cauchy-Schwarz inequality and the fact that $|\cos x|\le 1$ for $x\in\mathbb R$,
\begin{align*}
	\max_{0\le t\le n}a_{n,t}^2
	&=\frac1{4\pi^2n^2\|\kappa_m\|_2^4}\max_{0\le t\le n}\Bigl|\sum_{j=1}^m\kappa_{j,m}\cos(t\omega_j)\Bigr|^2\\
	&\le\frac1{4\pi^2n^2\|\kappa_m\|_2^4}\|\kappa_m\|_2^2\max_{0\le t\le n}\Bigl\{\sum_{j=1}^m\cos^2(t\omega_j)\Bigr\}\\
	&\le\frac m{4\pi^2n^2\|\kappa_m\|_2^2}
\end{align*}
and hence
\[
	\frac{\max_{0\le t\le n}a_{n,t}^2}{\varsigma_n^2}
	=\frac m{4\pi^2n^2\|\kappa_m\|_2^2}\cdot8\pi^2n\|\kappa_m\|_2^2
	=\frac{2m}n
	=o(1)
	\quad\text{as}\quad n\to\infty
\]
due to \autoref{a:sublinear} so that \eqref{eq:liu:wu:2010:5.2} is satisfied. \eqref{eq:liu:wu:2010:5.3} is also satisfied since
\[
	\frac{n\varsigma_n^2}{\sigma_n^2}
	=\frac{4\pi^2\|\kappa_m\|_2^2}{8\pi^2\|\kappa_m\|_2^2}
	=\frac12
\]
for sufficiently large values of $n$.

Next, we verify that \eqref{eq:liu:wu:2010:5.4} holds. We have that
\begin{align*}
	&\sum_{k=1}^n\sum_{t=1}^{k-1}\Bigl|\sum_{j=k+1}^na_{n,k-j}a_{n,t-j}\Bigr|^2=\\
&\le\sum_{k=1}^n\sum_{t=1}^n\Bigl[\sum_{j=1}^na_{n,k-j}^2a_{n,t-j}^2+2\sum_{j=k+1}^{n-1}\sum_{l=j+1}^na_{n,k-j}a_{n,t-j}a_{n,k-l}a_{n,t-l}\Bigr]\\
	&=\Bigl[\sum_{j=1}^n\sum_{k=1}^na_{n,k-j}^2\sum_{t=1}^na_{n,t-j}^2+2\sum_{k=1}^n\sum_{j=k+1}^{n-1}a_{n,k-j}\sum_{l=j+1}^na_{n,k-l}\Bigl(\sum_{t=1}^na_{n,t-j}a_{n,t-l}\Bigr)\Bigr]\\
	&=\Bigl[\sum_{j=1}^n\Bigl|\sum_{t=1}^na_{n,t-j}^2\Bigr|^2
	+2\sum_{k=1}^n\sum_{j=k+1}^{n-1}a_{n,k-j}\beta_{n,k,j}\Bigr],
\end{align*}
where
\[
	\beta_{n,k,j}
	\coloneq\sum_{l=j+1}^na_{n,k-l}\alpha_{n,j,l}
	\quad\text{and}\quad
	\alpha_{n,j,l}
	\coloneq\sum_{t=1}^na_{n,t-j}a_{n,t-l}.
\]
By the Cauchy-Schwarz inequality,
\begin{align*}
	\Bigl|2\sum_{k=1}^n\sum_{j=k+1}^{n-1}a_{n,k-j}\beta_{n,k,j}\Bigr|
	&\le2\sum_{k=1}^n\Bigl|\sum_{j=k+1}^{n-1}a_{n,k-j}\beta_{n,k,j}\Bigr|\\
	&\le2\sum_{k=1}^n\Bigl(\sum_{j=k+1}^{n-1}a_{n,k-j}^2\Bigr)^{1/2}\Bigl(\sum_{j=k+1}^{n-1}\beta_{n,k,j}^2\Bigr)^{1/2}\\
	&\le2\sum_{k=1}^n\Bigl(\sum_{j=1}^na_{n,k-j}^2\Bigr)^{1/2}\Bigl(\sum_{j=1}^n\beta_{n,k,j}^2\Bigr)^{1/2}
\end{align*}
and
\begin{align}
	\beta_{n,k,j}^2
	&=\Bigl|\sum_{l=j+1}^na_{n,k-l}\alpha_{n,j,l}\Bigr|^2\notag\\
	&\le\Bigl(\sum_{l=j+1}^na_{n,k-l}^2\Bigr)\Bigl(\sum_{l=j+1}^n\alpha_{n,j,l}^2\Bigr)\notag\\
	&\le\Bigl(\sum_{l=1}^na_{n,k-l}^2\Bigr)\Bigl(\sum_{l=1}^n\alpha_{n,j,l}^2\Bigr).\label{eq:betaCS}
\end{align}
Using \autoref{lemma:prodofcos} twice,
\begin{align}
	\sum_{l=1}^n\alpha_{n,j,l}^2
	&=\sum_{l=1}^n\Bigl|\sum_{t=1}^n
	\frac1{2\pi n\|\kappa_m\|_2^2}\sum_{r=1}^m\kappa_{r,m}\cos((t-j)\omega_r)
	\frac1{2\pi n\|\kappa_m\|_2^2}\sum_{s=1}^m\kappa_{s,m}\cos((t-l)\omega_s)\Bigr|^2\notag\\
	&=\frac1{16\pi^4n^4\|\kappa_m\|_2^8}\sum_{l=1}^n\Bigl|\sum_{r=1}^m\sum_{s=1}^m\kappa_{r,m}\kappa_{s,m}\sum_{t=1}^n\cos((t-j)\omega_r)\cos((t-l)\omega_s)\Bigr|^2\notag\\
&=\frac1{64\pi^4n^2\|\kappa_m\|_2^8}\sum_{l=1}^n\Bigl|\sum_{r=1}^m\kappa_{r,m}^2\cos((j-l)\omega_r)\Bigr|^2\notag\\
	&=\frac1{64\pi^4n^2\|\kappa_m\|_2^8}\sum_{l=1}^n\sum_{r=1}^m\sum_{s=1}^m\kappa_{r,m}^2\cos((j-l)\omega_r)\kappa_{s,m}^2\cos((j-l)\omega_s)\notag\\
	&=\frac1{64\pi^4n^2\|\kappa_m\|_2^8}\sum_{r=1}^m\sum_{s=1}^m\kappa_{r,m}^2\kappa_{s,m}^2\sum_{l=1}^n\cos((j-l)\omega_r)\cos((j-l)\omega_s)\notag\\
	&=\frac1{64\pi^4n^2\|\kappa_m\|_2^8}\|\kappa_m\|_4^4\cdot\frac n2\notag\\
	&=\frac{\|\kappa_m\|_4^4}{128\pi^4n\|\kappa_m\|_2^8}\label{eq:alphabound}
\end{align}
for sufficiently large values of $n$.

Using~\eqref{eq:betaCS}, \eqref{eq:sumofa^2}, and~\eqref{eq:alphabound},
\[
	\beta_{n,k,j}^2
	\le\Bigl(\sum_{l=1}^na_{n,k-l}^2\Bigr)\Bigl(\sum_{l=1}^n\alpha_{n,j,l}^2\Bigr)
	\le\frac1{8\pi^2n\|\kappa_m\|_2^2}\cdot\frac{\|\kappa_m\|_4^4}{128\pi^4n\|\kappa_m\|_2^8}
	=\frac{\|\kappa_m\|_4^4}{2^{10}\pi^6n^2\|\kappa_m\|_2^{10}}.
\]
Using~\eqref{eq:sumofa^2} again,
\[
	\sum_{j=1}^n\Bigl|\sum_{t=1}^na_{n,t-j}^2\Bigr|^2
	=\sum_{j=1}^n\Bigl|\frac1{8\pi^2n\|\kappa_m\|_2^2}\Bigr|^2
	=\frac1{64\pi^4n\|\kappa_m\|_2^4}
\]
and
\begin{align*}
	\Bigl|2\sum_{k=1}^n\sum_{j=k+1}^{n-1}a_{n,k-j}\beta_{n,k,j}\Bigr|
	&\le2\sum_{k=1}^n\Bigl(\sum_{j=1}^na_{n,k-j}^2\Bigr)^{1/2}\Bigl(\sum_{j=1}^n\beta_{n,k,j}^2\Bigr)^{1/2}\\
	&\le2\sum_{k=1}^n\Bigl(\frac1{8\pi^2n\|\kappa_m\|_2^2}\Bigr)^{1/2}
	\cdot\frac{n^{1/2}\|\kappa_m\|_4^2}{2^{5}\pi^3n\|\kappa_m\|_2^5}\\
	&=2n\frac1{2^{3/2}\pi n^{1/2}\|\kappa_m\|_2}
	\cdot\frac{n^{1/2}\|\kappa_m\|_4^2}{2^5\pi^3n\|\kappa_m\|_2^5}\\
	&=\frac{\|\kappa_m\|_4^2}{2^{11/2}\pi^4\|\kappa_m\|_2^6}
\end{align*}
so that
\begin{align*}
	&\frac{\sum_{k=1}^n\sum_{t=1}^{k-1}|\sum_{j=k+1}^na_{n,k-j}a_{n,t-j}|^2}{\sigma_n^4}=\\
	&=\sum_{k=1}^n\sum_{t=1}^{k-1}\Bigl|\sum_{j=k+1}^na_{n,k-j}a_{n,t-j}\Bigr|^2
	\cdot16\pi^4\|\kappa_m\|_2^4\\
	&\le\Bigl[\frac1{64\pi^4n\|\kappa_m\|_2^4}
	+\frac{\|\kappa_m\|_4^2}{2^{11/2}\pi^4\|\kappa_m\|_2^6}\Bigr]
	\cdot16\pi^4\|\kappa_m\|_2^4\\
	&=\frac1{4n}
	+2^{-3/2}\biggl(\frac{\|\kappa_m\|_4}{\|\kappa_m\|_2}\biggr)^2\\
	&=o(1)
\end{align*}
as $n\to\infty$ due to \eqref{eq:assumption_kappa} and hence \eqref{eq:liu:wu:2010:5.4} holds.

We finally verify that \eqref{eq:liu:wu:2010:5.5} holds. Using the identity
\[
	\cos\theta-\cos(\varphi)
	=-2\sin\Bigl(\frac{\theta+\varphi}2\Bigr)\sin\Bigl(\frac{\theta-\varphi}2\Bigr)
\]
for $\theta,\varphi\in\mathbb R$ and \autoref{lemma:prodofsin},
\begin{align*}
	&\sum_{k=1}^n|a_{n,k}-a_{n,k-1}|^2=\\
&=\frac1{4\pi^2n^2\|\kappa_m\|_2^4}\sum_{k=1}^n\Bigl|\sum_{j=1}^m\kappa_{j,m}[\cos(k\omega_j)-\cos((k-1)\omega_j)]\Bigr|^2\\
	&=\frac1{\pi^2n^2\|\kappa_m\|_2^4}\sum_{k=1}^n\Bigl|\sum_{j=1}^m\kappa_{j,m}[\sin((2k-1)\omega_j/2)\sin(\omega_j/2)]\Bigr|^2\\
	&=\frac1{\pi^2n^2\|\kappa_m\|_2^4}\sum_{j,l=1}^m\kappa_{j,m}\kappa_{l,m}\sin(\omega_j/2)\sin(\omega_l/2)\Bigl\{\sum_{k=1}^n\sin((k-1/2)\omega_j)\sin((k-1/2)\omega_l)\Bigr\}\\
	&=\frac1{2\pi^2n\|\kappa_m\|_2^4}\sum_{j=1}^m\kappa_{j,m}^2\sin^2(\omega_j/2)
\end{align*}
for sufficiently large values of $n$. Hence,
\begin{align*}
	\frac{\sum_{k=1}^n|a_{n,k}-a_{n,k-1}|^2}{\varsigma_n^2}
	&=\frac1{2\pi^2n\|\kappa_m\|_2^4}\sum_{j=1}^m\kappa_{j,m}^2\sin^2(\omega_j/2)
	\cdot8\pi^2n\|\kappa_m\|_2^2\\
	&=\frac{4\sum_{j=1}^m\kappa_{j,m}^2\sin^2(\omega_j/2)}{\|\kappa_m\|_2^2}\\
	&=o(1)
\end{align*}
as $n\to\infty$ due to \eqref{eq:assumption_kappa} and hence \eqref{eq:liu:wu:2010:5.5} holds. \autoref{thm:liu:wu:2010:6} then implies that
\[
	\sigma_n^{-1}(W_n-\operatorname EW_n)
	\xrightarrow dN(0,4\pi^2f^2(0))
	\quad\text{as}\quad
	n\to\infty,
\]
which completes the proof.
\end{proof}

\subsection{Proofs of the theorems of Section \ref{s:est}} \label{ss:p3}
\begin{proof} [\sc Proof of \autoref{t:clt1}:]
We set $\kappa_{j,m}=1$ in \autoref{thm:generalCLT} for $j=1,\ldots,m$ and $m\ge1$. The first condition of~\eqref{eq:assumption_kappa} holds. Using the inequality $|\sin x|\le|x|$ for $x\in\mathbb R$,

\[
	\frac{\sum_{j=1}^m\sin^2(\omega_j/2)}m
	\le\frac{\pi^2 \sum_{j=1}^m j^2}{n^2m}
	=O\Bigl(\Bigl(\frac mn\Bigr)^2\Bigr)
\]
as $n\to\infty$ and the second condition of~\eqref{eq:assumption_kappa} also holds under \autoref{a:sublinear}. The proof is complete.
\end{proof}

\begin{proof}[\sc Proof of~\autoref{thm:convrate}]
Observe that $Q_n$ can be expressed in the following way
\begin{equation}\label{eq:Q_nlrv}
	Q_n
	=\frac1{2\pi}\sum_{h=-(n-1)}^{n-1}a_{n,h}\hat\gamma(h)
\end{equation}
for $n\ge1$, where
\[
	a_{n,h}
	= \frac1m\sum_{j=1}^m\cos(h\omega_j)
\]
for $n\ge1$ and $|h|<n$ while $\hat\gamma(h)$ are sample autocovariances given by
\[
	\hat\gamma(h)
	= \frac1n\sum_{j=1}^{n-h}X_{j+h}X_j
\]
for $0\le h<n$ and $\hat\gamma(h)=\hat\gamma(-h)$ for $-n<h\le 0$. Hence, we have that
\begin{align*}
	&\operatorname E\biggl[\frac1m\sum_{j=1}^m I_n(\omega_j)\biggr]-f(0)\\
	 &=\frac1{2\pi}\sum_{h=-(n-1)}^{n-1}\Bigl(1-\frac{|h|}n\Bigr)\gamma(h)\biggl[\frac1m\sum_{j=1}^m\cos(h\omega_j)\biggr]
	-\frac1{2\pi}\sum_{h=-\infty}^\infty\gamma(h)\\
	 &=\frac1{2\pi}\biggl[\,\sum_{h=-(n-1)}^{n-1}\biggl\{\Bigl(1-\frac{|h|}n\Bigr)\biggl[\frac1m\sum_{j=1}^m\cos(h\omega_j)\biggr]-1\biggr\}\gamma(h)
	-\sum_{|h|\ge n}\gamma(h)\biggr]\\
	 &=\frac1\pi\biggl[\,\sum_{h=1}^{n-1}\biggl\{\biggl[\frac1m\sum_{j=1}^m\cos(h\omega_j)\biggr]
	-\frac hn\biggl[\frac1m\sum_{j=1}^m\cos(h\omega_j)\biggr]
	-1\biggr\}\gamma(h)
	-\sum_{h=n}^\infty\gamma(h)\biggr]\\
	 &=\frac1\pi\biggl[\,\sum_{h=1}^{n-1}\biggl\{\biggl[\frac1m\sum_{j=1}^m\cos(h\omega_j)\biggr]
	-1
	-\frac hn\biggl[\frac1m\sum_{j=1}^m\cos(h\omega_j)\biggr]\biggr\}\gamma(h)
	-\sum_{h=n}^\infty\gamma(h)\biggr].
\end{align*}
Split the sum into two parts
\begin{multline}\label{eq:splitsum}
	 \sum_{h=1}^{n-1}\biggl|\frac1m\sum_{j=1}^m\cos(h\omega_j)-1\biggr||\gamma(h)|\\
	 =\sum_{h=1}^{\tau_{n,m}-1}\biggl|\frac1m\sum_{j=1}^m\cos(h\omega_j)-1\biggr||\gamma(h)|
	 +\sum_{h=\tau_{n,m}}^{n-1}\biggl|\frac1m\sum_{j=1}^m\cos(h\omega_j)-1\biggr||\gamma(h)|,
\end{multline}
where
\[
	\tau_{n,m}
	= \biggl\lfloor\frac{\sqrt 3n}{\pi\sqrt{(m+1)(2m+1)}}\biggr\rfloor.
\]
Using the mean value theorem and the fact that $|\sin x|\le|x|$ for $x\in\mathbb R$,
\[
	\Bigl|\cos\Bigl(h\cdot\frac{2\pi j}n\Bigr)-\cos(0)\Bigr|
	=|\sin c|\Bigl|\frac{2\pi hj}n\Bigr|
	\le\Bigl|\frac{2\pi hj}n\Bigr|^2
\]
with some $c\in(0,2\pi hj/n)$. Hence,
\begin{align}
	 \sum_{h=1}^{\tau_{n,m}-1}\biggl|\frac1m\sum_{j=1}^m\cos(h\omega_j)-1\biggr||\gamma(h)|
	&\le\sum_{h=1}^{\tau_{n,m}-1}\frac23\Bigl|\frac{\pi h}n\Bigr|^2(m+1)(2m+1)|\gamma(h)|\notag\\
	&=\frac23\Bigl|\frac\pi n\Bigr|^2(m+1)(2m+1)\sum_{h=1}^{\tau_{n,m}-1}h^2|\gamma(h)|.\label{eq:smallterms}
\end{align}
If $0\le r<2$,
\[
	\sum_{h=1}^{\tau_{n,m}-1}h^2|\gamma(h)|
	=\sum_{h=1}^{\tau_{n,m}-1}h^{2-r}h^r|\gamma(h)|
	\le(\tau_{n,m}-1)^{2-r}\sum_{h=1}^{\tau_{n,m}-1}h^r|\gamma(h)|
\]
and~\eqref{eq:smallterms} is $O((m/n)^r)$ as $n\to\infty$. If $r\ge2$, \eqref{eq:smallterms} is $O((m/n)^2)$ as $n\to\infty$. We bound the second term of~\eqref{eq:splitsum} in the following way
\begin{align*}
	 \sum_{h=\tau_{n,m}}^{n-1}\biggl|\frac1m\sum_{j=1}^m\cos(h\omega_j)-1\biggr||\gamma(h)|
	&\le2\sum_{h=\tau_{n,m}}^{n-1}|\gamma(h)|\\
	&=2\sum_{h=\tau_{n,m}}^{n-1}h^{-r}h^r|\gamma(h)|\\
	&\le2\tau_{n,m}^{-r}\sum_{h=\tau_{n,m}}^{n-1}h^r|\gamma(h)|\\
	&=o((m/n)^r)
\end{align*}
as $n\to\infty$. We have that
\[
	\sum_{h=1}^{n-1}\frac hn\biggl|\frac1m\sum_{j=1}^m\cos(h\omega_j)\biggr||\gamma(h)|
	\le\frac1n\sum_{h=1}^{n-1}h|\gamma(h)|.
\]
If $0<r<1$, then
\[
	\frac1n\sum_{h=1}^{n-1}h|\gamma(h)|
	=\frac1n\sum_{h=1}^{n-1}h^{1-r}h^r|\gamma(h)|
	\le\frac{(n-1)^{1-r}}n\sum_{h=1}^{n-1}h^r|\gamma(h)|
	=O(n^{-r})
\]
as $n\to\infty$. If $r\ge1$, then
\[
	\frac1n\sum_{h=1}^{n-1}h|\gamma(h)|
	=O(n^{-1})
\]
as $n\to\infty$.
Finally,
\[
	\sum_{h=n}^\infty|\gamma(h)|
	=\sum_{h=n}^\infty h^rh^{-r}|\gamma(h)|
	\le n^{-r}\sum_{h=n}^\infty h^r|\gamma(h)|
	=o(n^{-r})
\]
as $n\to\infty$. The proof is complete.
\end{proof}

\subsection{Proofs of the results of Section \ref{s:cpa}} \label{s:p4}

\begin{proof}[\sc Proof of~\autoref{t:Q-t}]
We set $\kappa_{j,m}=\nu_{j,m}$  in \autoref{thm:generalCLT} with $\nu_{j,m}$ given by~\eqref{eq:nu} for $j=1,\ldots,m$ and $m\ge1$. Using \autoref{l:2} and the inequality $|\sin x|\le|x|$ for $x\in\mathbb R$,
\[
	\frac{\|\nu_m\|_4}{\|\nu_m\|_2}
	\le\frac{m^{1/4}\max_{1\le j\le m}|\nu_{j,m}|}{m^{1/2}\cdot m^{-1/2}\|\nu_m\|_2}
	=O\Bigl(\frac{\log m}{m^{1/4}}\Bigr)
\]
and
\[
	\frac{\sum_{j=1}^m\nu_{j,m}^2\sin^2(\omega_j/2)}{\|\nu_m\|_2^2}
\le\frac{\pi\max_{1\le j\le m}\nu_{j,m}^2\cdot\sum_{j=1}^mj^2}{n^2\|\nu_m\|_2^2}
	=O\Bigl(\frac{m^2\log^2m}{n^2}\Bigr)
\]
as $n\to\infty$ so that conditions~\eqref{eq:assumption_kappa} are satisfied under assumption~\eqref{e:m}. \autoref{thm:generalCLT} then implies that
\[
	\frac{\|\nu_m\|_2}{f(0)}
	\Bigl[\frac1{\|\nu_n\|_2^2}\sum_{j=1}^m\nu_{j,m}I_n(\omega_j)-\operatorname E\Bigl[\frac1{\|\nu_n\|_2^2}\sum_{j=1}^m\nu_{j,m}I_n(\omega_j)\Bigr]\Bigr]
	\xrightarrow d
	\mathcal N(0,1)
	\quad\text{as}\quad
	n\to\infty.
\]
To complete the proof, we show that
\[
	E\Bigl[\frac1{\sqrt m}\sum_{j=1}^m\nu_{j,m}I_n(\omega_j)\Bigr]
	=o(1)
	\quad\text{as}\quad n\to\infty.
\]
We have that
\begin{align*}
	\operatorname E\Bigl[\frac1{\sqrt m}\sum_{l=1}^m\nu_{l,m}I_n(\omega_l)\Bigr]
	&=\frac1{\sqrt m}\sum_{l=1}^m\nu_{l,m}\operatorname EI_n(\omega_l)\\
	&=\frac1{2\pi}\sum_{h=-(n-1)}^{n-1}\Bigl[\frac1{\sqrt m}\sum_{l=1}^m\nu_{l,m}\cos(h\omega_l)\Bigr]\Bigl(1-\frac{|h|}n\Bigr)\gamma(h)\\
&=\frac1\pi\sum_{h=1}^{n-1}\Bigl[\frac1{\sqrt m}\sum_{l=1}^m\nu_{l,m}\{\cos(h\omega_l)-1\}\Bigr]\Bigl(1-\frac{h}n\Bigr)\gamma(h)
\end{align*}
since $\sum_{j=1}^m\nu_{j,m}=0$ for $m\ge1$. Using \autoref{l:2}, the identity $1-\cos x=2\sin^2(x/2)$ for $x\in\mathbb R$, and the inequality $|\sin x|\le |x|$ for $x\in\mathbb R$,
\begin{align*}
	\Bigl|\operatorname E\Bigl[\frac1{\sqrt m}\sum_{l=1}^m\nu_{l,m}I_n(\omega_l)\Bigr]\Bigr|
	&\le\frac1\pi\sum_{h=1}^{n-1}\frac1{\sqrt m}\sum_{l=1}^m|\nu_{l,m}||\cos(h\omega_l)-1||\gamma(h)|\\
	&\le\frac{2C\log m}\pi\sum_{h=1}^{n-1}\frac1{\sqrt m}\sum_{l=1}^m\Bigl(\frac{h\omega_l}{2}\Bigr)^2|\gamma(h)|\\
	&=\frac{2\pi C}3\sum_{h=1}^{n-1}h^2|\gamma(h)|\cdot\frac{\log m}{n^2\sqrt m}\sum_{l=1}^ml^2\\
	&=O\Bigl(\frac{m^{5/2}\log m}{n^2}\Bigr)
\end{align*}
as $n\to\infty$ since $\sum_{h=1}^\infty h^2|\gamma(h)|<\infty$ under \autoref{a:eta} (see inequality~\eqref{e:h3g}). The proof is complete.
\end{proof}

\begin{proof}[\sc Proof of~\autoref{p:ex}]
Let $\mu = E[x_t] = \frac{\alpha}{1-\phi}$, then
$x_t = \mu + \sum_{j=0}^{\infty} \phi^j w_{t-j}$.
Define
\[
    u_t = \exp \lp \frac{x_t}{2} \rp
    = \exp \lp \frac{1}{2} \mu + \frac{1}{2}
    \sum_{j=0}^{\infty} \phi^j w_{t-j} \rp\
    = \exp \lp \zeta_{m-1,t} + \xi_{m,t} \rp,
\]
where
\[
\zeta_{m-1,t} = \frac{1}{2}\mu + \frac{1}{2}
\sum_{j=0}^{m-1}\phi^j w_{t-j}, \ \ \
\xi_{m,t} = \frac{1}{2} \sum_{j=m}^{\infty} \phi^j w_{t-j}.
\]
Clearly, the mapping from $(w_t, w_{t-1},\ldots)$ to $u_t$ is measurable. For each $m \in \mathbb{N}^+$, we can approximate the $u_t$ by
\begin{equation} \label{e:ut-m}
u_t^{(m)} = \exp \lp \zeta_{m-1,t} + \xi_{m,t}^{\prime} \rp,
\end{equation}
where
\[
 \xi_{m,t}^{\prime} = \frac{1}{2} \sum_{j=m}^{\infty} \phi^j w_{t-j}^{\prime},
\]
and $\{ w_{t-l}^{\prime},\ l\geq m, m \in \mathbb{N}^+\}$
are iid copies of $w_0$.

Using the  identity $(a-b)^4 = a^4-4a^3b + 6a^2b^2-4ab^3+b^4$, we have
\begin{align*}
    \left|u_t - u_t^{(m)} \right|^4 &= \exp \lp 4\zeta_{m-1,t} \rp \cdot \left[ \exp \lp \xi_{m,t} \rp - \exp \lp \xi_{m,t}^{\prime} \rp \right]^4\\
    &=  \exp \lp 4\zeta_{m-1,t} \rp \cdot \big\{ \exp \lp 4\xi_{m,t} \rp -4 \exp \lp 3\xi_{m,t} + \xi_{m,t}^{\prime} \rp \\
    &\quad + 6 \exp  \lp 2\xi_{m,t}  + 2\xi_{m,t}^{\prime} \rp - 4 \exp \lp \xi_{m,t} + 3\xi_{m,t}^{\prime} \rp + \exp \lp 4\xi_{m,t}^{\prime} \rp \big\}.
\end{align*}
Let $\varphi(s)$ be the characteristic function of $w_0$.
Then, the characteristic function of the $\xi_{m,t}$ is given by
\[
    \varphi_{\xi}(s) = E[\exp \lp is \xi_{m,t} \rp]
    = \prod_{j=m}^{\infty} \varphi(s\phi^j)
    = \exp \left\{ -\frac{1}{2} \lp \frac{\phi^{2m}}{1-\phi^2}
    \sigma_w^2 \rp s^2 \right\}.
\]
Hence, 
\[
\xi_{m,t} \stackrel{d}{=} \xi_{m,t}^{\prime} \sim {\cal N} \lp 0, \frac{\phi^{2m}}{4(1-\phi^2)} \sigma_w^2 \rp.
\]
On the other hand,
\[
\zeta_{m-1,t} \sim {\cal N} \lp \frac{1}{2} \mu, \frac{1-\phi^{2m}}{4(1-\phi^2)} \sigma_w^2 \rp.
\]
We know that $E[e^{\mu + \sigma Z}] = \exp \lp \mu + \frac{1}{2} \sigma^2 \rp$ if $Z \sim {\cal N}(0,1)$. Consequently,
\begin{align} \label{e:ED-ut}
    E\left|u_t - u_t^{(m)} \right|^4 &= \exp \lp 2\mu + \frac{2(1-\phi^{2m})}{1-\phi^2} \sigma_w^2 \rp \cdot \Bigg\{ 2\exp \lp
\frac{2\phi^{2m}}{1-\phi^2}  \sigma_w^2 \rp \nonumber \\
&\quad - 8 \exp \lp
\frac{5\phi^{2m}}{2(1-\phi^2)}  \sigma_w^2 \rp + 6 \exp \lp
\frac{2\phi^{2m}}{1-\phi^2}  \sigma_w^2 \rp \Bigg\} \nonumber \\
&= \exp \lp 2\mu + \frac{2\sigma_w^2}{1-\phi^2} \rp \cdot \Bigg\{ 8  - 8 \exp \lp \frac{\phi^{2m}}{2(1-\phi^2)} \sigma_w^2 \rp \Bigg\} \nonumber \\
&= O(\phi^{2m}),
\end{align}
where the last equality comes from the Taylor expansion.

Finally, let $\boldsymbol{\eta}_t = (w_t, \epsilon_t)^\intercal$. Since $r_t = u_t \epsilon_t$, the mapping from $(\boldsymbol{\eta}_t, \boldsymbol{\eta}_{t-1}, \ldots)$ to $r_t$ is measurable. For each $m \in \mathbb{N}^+$, the $r_t$ can be approximated by $r_t^{(m)} = u_t^{(m)} \epsilon_t$, where the $u_t^{(m)}$ is given by \eqref{e:ut-m}. Since $\{\epsilon_t\}$ and $\{w_t\}$ are mutually independent,
\begin{align*}
    \| r_t - r_t^{(m)} \|_4 &= \|\epsilon_0 \|_4 \cdot \|u_t - u_t^{(m)}\|_4\\
    &= O\lp \phi^{m/2} \rp \\
    &= O(m^{-\kappa^*})
\end{align*}
for any $\kappa^* > 0$. The proof is complete.
\end{proof}

\bigskip\centerline{\sc Acknowledgement}

\noindent
P.\ Kokoszka and X.\ Meng  were  partially supported by the United
States National Science Foundation grants DMS--2123761 and 
DMS--2412408.
The authors  thank  two anonymous reviewers, as well as the
co-editor, for their comments and suggestions that helped 
improve this work substantially.

\bigskip\centerline{\sc  Data Availability Statement}
\noindent  This paper does not use any data.

\bigskip\centerline{\sc  Supporting Information}
\noindent  Additional Supporting Information may be found
online in the supporting information tab for this article.
It contains graphs and tables related to the test discussed in
Section~\ref{s:cpa}.

\bigskip
\small\renewcommand{\baselinestretch}{0.95}
\bibliography{bibV,neda}

\newpage

\normalsize\renewcommand{\baselinestretch}{1.0}

\setcounter{page}{1}

\centerline{\Large \it Supporting Information}

\appendix

\section*{Finite sample behavior  of the test based on Theorem~6}

Figures~\ref{f:g11} and~\ref{f:sv} show the empirical density
functions of the normalized LWE based on $\{r_t\}$
for both GARCH(1,1) and SV models.
These densities are asymptotically standard normal.

Tables~\ref{tb:g11} and~\ref{tb:sv} show
the empirical size of the test based on Theorem~6.
We set  $\mu = 0$
in~(19) and consider $r_t$ that follow  either the GARCH$(1,1)$
model or the  SV model with the same parameters as in Figures~\ref{f:g11} and~\ref{f:sv}.
The jumps sizes
$\Delta = 0.3$ and $0.6$  are close, respectively,  to 0.25 and 0.5
of the standard deviation of the $r_t$. The empirical sizes
Tables~\ref{tb:g11} and~\ref{tb:sv}
are  comparable with those obtained for linear processes.

\begin{figure}[!htbp]\centering
\includegraphics[scale = 0.9]{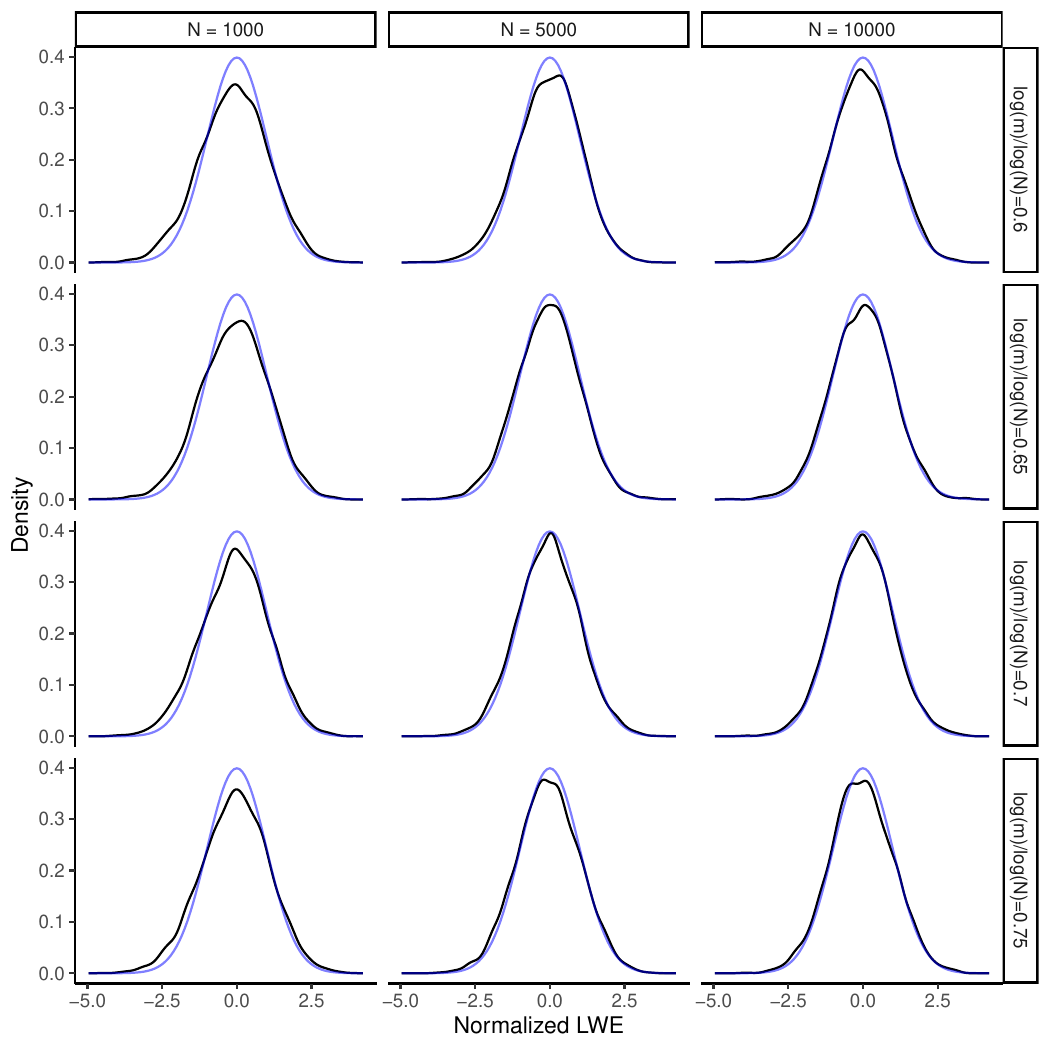}
\caption{ Empirical density functions of $T^{(R)}$
based on $5{,}000$ replications with the standard normal density
overlaid in blue.  The
$r_t$ follow  GARCH$(1,1)$ model
with $\alpha_0 = 0.5$, $\alpha_1 = 0.2$
and $\beta_1 = 0.4$.
\label{f:g11}}
\end{figure}

\begin{figure}[!htbp]\centering
\includegraphics[scale = .9]{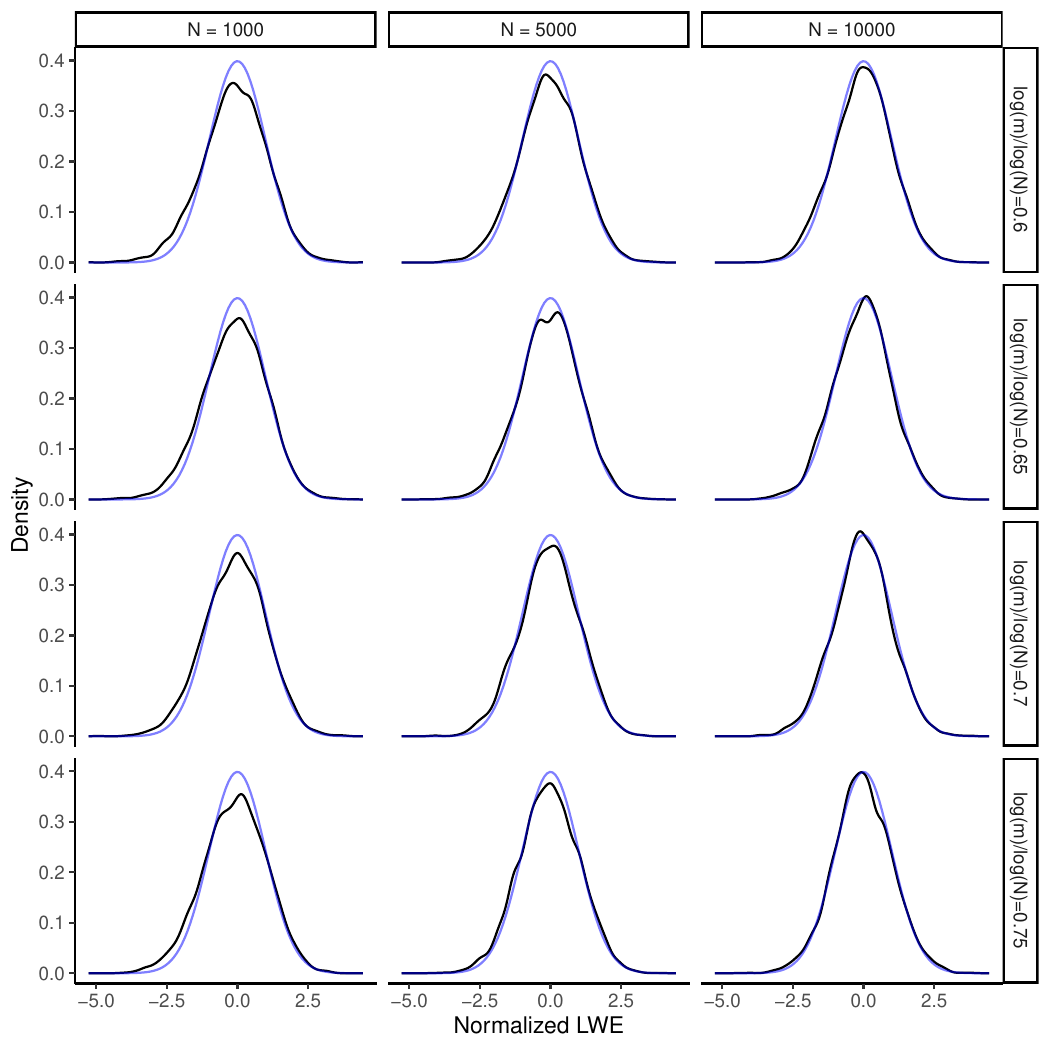}
\caption{The same as \autoref{f:g11}, but the $r_t$ follow
SV model  with $\alpha = 0$, $\phi = 0.5$
and $\sigma_w^2 = 1$.  \label{f:sv}}
\end{figure}

\clearpage

\begin{table}[!htbp]\centering
{\footnotesize
\caption{Empirical size (in percent) for observations from the GARCH$(1,1)$ model, based on $5{,}000$ replications.
 }  \label{tb:g11}
\begin{tabular}{ccrrrrrrrrrrr}
\hline
Nominal size   &                      &                          & 1.0                      &                           &                      &                          & 5.0                      &                           &                      &                          & 10.0                     &                           \\ \hline
$N$            &                      & \multicolumn{1}{c}{1000} & \multicolumn{1}{c}{5000} & \multicolumn{1}{c}{10000} & \multicolumn{1}{c}{} & \multicolumn{1}{c}{1000} & \multicolumn{1}{c}{5000} & \multicolumn{1}{c}{10000} & \multicolumn{1}{c}{} & \multicolumn{1}{c}{1000} & \multicolumn{1}{c}{5000} & \multicolumn{1}{c}{10000} \\ \hline
               &                      & \multicolumn{11}{c}{$\Delta = 0.30, \theta = 0.25$}                                                                                                                                                                                                                                               \\ \cline{3-13}
$m = N^{0.60}$ &                      & 0.90                     & 1.12                     & 1.68                      &                      & 4.00                     & 5.00                     & 6.48                      &                      & 7.86                     & 9.96                     & 11.56                     \\
$m = N^{0.65}$ &                      & 0.88                     & 1.14                     & 1.14                      &                      & 3.92                     & 4.84                     & 5.80                      &                      & 7.68                     & 9.22                     & 10.44                     \\
$m = N^{0.70}$ & \multicolumn{1}{l}{} & 0.92                     & 1.18                     & 1.46                      &                      & 3.98                     & 5.00                     & 5.58                      &                      & 7.86                     & 9.68                     & 9.90                      \\
$m = N^{0.75}$ &                      & 0.98                     & 1.14                     & 1.14                      &                      & 4.38                     & 5.30                     & 5.22                      &                      & 8.34                     & 9.98                     & 10.06                     \\ \hline
               &                      & \multicolumn{11}{c}{$\Delta = 0.30, \theta = 0.50$}                                                                                                                                                                                                                                               \\ \cline{3-13}
$m = N^{0.60}$ &                      & 0.64                     & 0.68                     & 0.78                      &                      & 3.14                     & 3.36                     & 4.20                      &                      & 6.26                     & 6.58                     & 8.28                      \\
$m = N^{0.65}$ &                      & 0.54                     & 0.72                     & 0.50                      &                      & 3.02                     & 3.26                     & 3.72                      &                      & 5.90                     & 6.88                     & 7.70                      \\
$m = N^{0.70}$ & \multicolumn{1}{l}{} & 0.80                     & 0.70                     & 0.98                      &                      & 3.38                     & 3.96                     & 4.06                      &                      & 6.68                     & 7.36                     & 7.66                      \\
$m = N^{0.75}$ &                      & 0.86                     & 0.64                     & 0.88                      &                      & 3.78                     & 3.72                     & 3.84                      &                      & 7.22                     & 7.58                     & 8.02                      \\ \hline
               &                      & \multicolumn{11}{c}{$\Delta = 0.30, \theta = 0.75$}                                                                                                                                                                                                                                               \\ \cline{3-13}
$m = N^{0.60}$ &                      & 0.76                     & 1.08                     & 1.72                      &                      & 3.66                     & 5.02                     & 5.96                      &                      & 7.26                     & 10.00                    & 11.08                     \\
$m = N^{0.65}$ &                      & 0.64                     & 1.20                     & 1.24                      &                      & 3.38                     & 4.68                     & 5.30                      &                      & 7.28                     & 9.20                     & 10.42                     \\
$m = N^{0.70}$ & \multicolumn{1}{l}{} & 0.80                     & 1.12                     & 1.62                      &                      & 3.88                     & 4.90                     & 5.66                      &                      & 7.80                     & 9.38                     & 10.18                     \\
$m = N^{0.75}$ &                      & 0.90                     & 0.98                     & 1.30                      &                      & 4.20                     & 4.86                     & 5.60                      &                      & 8.24                     & 9.88                     & 10.08                     \\ \hline
               &                      & \multicolumn{11}{c}{$\Delta = 0.60, \theta = 0.25$}                                                                                                                                                                                                                                               \\ \cline{3-13}
$m = N^{0.60}$ &                      & 1.68                     & 1.76                     & 1.58                      &                      & 6.24                     & 6.04                     & 6.00                      &                      & 11.12                    & 11.10                    & 11.32                     \\
$m = N^{0.65}$ &                      & 1.74                     & 1.50                     & 1.26                      &                      & 6.02                     & 6.08                     & 6.02                      &                      & 10.94                    & 10.44                    & 10.34                     \\
$m = N^{0.70}$ & \multicolumn{1}{l}{} & 1.52                     & 1.44                     & 1.64                      &                      & 5.70                     & 6.06                     & 6.04                      &                      & 10.78                    & 11.04                    & 10.24                     \\
$m = N^{0.75}$ &                      & 1.50                     & 1.36                     & 1.54                      &                      & 6.12                     & 5.88                     & 5.84                      &                      & 11.12                    & 11.06                    & 10.62                     \\ \hline
               &                      & \multicolumn{11}{c}{$\Delta = 0.60, \theta = 0.5$}                                                                                                                                                                                                                                                \\ \cline{3-13}
$m = N^{0.60}$ &                      & 0.78                     & 0.74                     & 0.72                      &                      & 3.82                     & 3.74                     & 4.32                      &                      & 7.68                     & 7.24                     & 8.62                      \\
$m = N^{0.65}$ &                      & 0.66                     & 0.78                     & 0.60                      &                      & 3.68                     & 3.56                     & 4.08                      &                      & 7.30                     & 7.48                     & 8.20                      \\
$m = N^{0.70}$ & \multicolumn{1}{l}{} & 0.86                     & 0.84                     & 1.00                      &                      & 3.98                     & 4.18                     & 4.20                      &                      & 7.60                     & 7.82                     & 8.12                      \\
$m = N^{0.75}$ &                      & 0.98                     & 0.70                     & 0.94                      &                      & 4.38                     & 4.30                     & 4.06                      &                      & 8.10                     & 8.18                     & 8.52                      \\ \hline
               &                      & \multicolumn{11}{c}{$\Delta = 0.60, \theta = 0.75$}                                                                                                                                                                                                                                               \\ \cline{3-13}
$m = N^{0.60}$ &                      & 1.58                     & 1.82                     & 1.66                      &                      & 6.16                     & 6.32                     & 6.26                      &                      & 10.84                    & 11.14                    & 11.46                     \\
$m = N^{0.65}$ &                      & 1.38                     & 1.52                     & 1.54                      &                      & 5.70                     & 5.64                     & 5.94                      &                      & 10.42                    & 10.52                    & 10.70                     \\
$m = N^{0.70}$ & \multicolumn{1}{l}{} & 1.32                     & 1.58                     & 1.94                      &                      & 5.86                     & 6.12                     & 5.98                      &                      & 10.86                    & 10.48                    & 10.66                     \\
$m = N^{0.75}$ &                      & 1.56                     & 1.50                     & 1.54                      &                      & 5.56                     & 5.94                     & 6.06                      &                      & 10.92                    & 11.02                    & 11.02                     \\ \hline
\end{tabular}}
\end{table}

\begin{table}[!htbp]\centering
{\footnotesize
\caption{Empirical size (in percent) for observations from the SV model, based on $5{,}000$ replications.
 }  \label{tb:sv}
\begin{tabular}{ccrrrrrrrrrrr}
\hline
Nominal size   &                      &                          & 1.0                      &                           &                      &                          & 5.0                      &                           &                      &                          & 10.0                     &                           \\ \hline
$N$            &                      & \multicolumn{1}{c}{1000} & \multicolumn{1}{c}{5000} & \multicolumn{1}{c}{10000} & \multicolumn{1}{c}{} & \multicolumn{1}{c}{1000} & \multicolumn{1}{c}{5000} & \multicolumn{1}{c}{10000} & \multicolumn{1}{c}{} & \multicolumn{1}{c}{1000} & \multicolumn{1}{c}{5000} & \multicolumn{1}{c}{10000} \\ \hline
               &                      & \multicolumn{11}{c}{$\Delta = 0.30, \theta = 0.25$}                                                                                                                                                                                                                                               \\ \cline{3-13}
$m = N^{0.60}$ &                      & 0.72                     & 1.08                     & 1.22                      &                      & 2.74                     & 4.94                     & 5.34                      &                      & 6.04                     & 10.22                    & 10.40                     \\
$m = N^{0.65}$ &                      & 0.66                     & 0.80                     & 1.08                      &                      & 2.64                     & 4.32                     & 5.16                      &                      & 5.84                     & 9.40                     & 9.62                      \\
$m = N^{0.70}$ & \multicolumn{1}{l}{} & 0.68                     & 0.86                     & 1.06                      &                      & 3.06                     & 4.08                     & 4.96                      &                      & 6.14                     & 8.68                     & 9.74                      \\
$m = N^{0.75}$ &                      & 0.58                     & 0.90                     & 1.38                      &                      & 3.16                     & 4.72                     & 5.18                      &                      & 6.74                     & 9.36                     & 10.06                     \\ \hline
               &                      & \multicolumn{11}{c}{$\Delta = 0.30, \theta = 0.50$}                                                                                                                                                                                                                                               \\ \cline{3-13}
$m = N^{0.60}$ &                      & 0.62                     & 0.58                     & 0.66                      &                      & 2.68                     & 3.32                     & 3.66                      &                      & 5.28                     & 7.44                     & 7.46                      \\
$m = N^{0.65}$ &                      & 0.52                     & 0.50                     & 0.58                      &                      & 2.50                     & 3.52                     & 3.36                      &                      & 5.18                     & 6.88                     & 7.16                      \\
$m = N^{0.70}$ & \multicolumn{1}{l}{} & 0.62                     & 0.54                     & 0.72                      &                      & 2.62                     & 3.28                     & 3.50                      &                      & 5.72                     & 6.84                     & 7.64                      \\
$m = N^{0.75}$ &                      & 0.58                     & 0.50                     & 0.92                      &                      & 2.88                     & 3.82                     & 4.10                      &                      & 5.72                     & 7.74                     & 8.12                      \\ \hline
               &                      & \multicolumn{11}{c}{$\Delta = 0.30, \theta = 0.75$}                                                                                                                                                                                                                                               \\ \cline{3-13}
$m = N^{0.60}$ &                      & 0.78                     & 1.06                     & 1.20                      &                      & 3.06                     & 5.20                     & 5.68                      &                      & 6.04                     & 9.98                     & 10.54                     \\
$m = N^{0.65}$ &                      & 0.60                     & 0.82                     & 0.90                      &                      & 3.00                     & 4.74                     & 5.10                      &                      & 5.98                     & 8.90                     & 9.90                      \\
$m = N^{0.70}$ & \multicolumn{1}{l}{} & 0.72                     & 0.80                     & 1.00                      &                      & 3.24                     & 4.56                     & 4.78                      &                      & 6.66                     & 9.22                     & 9.58                      \\
$m = N^{0.75}$ &                      & 0.68                     & 0.78                     & 1.20                      &                      & 3.22                     & 4.76                     & 4.90                      &                      & 6.48                     & 9.58                     & 10.00                     \\ \hline
               &                      & \multicolumn{11}{c}{$\Delta = 0.60, \theta = 0.25$}                                                                                                                                                                                                                                               \\ \cline{3-13}
$m = N^{0.60}$ &                      & 1.12                     & 1.78                     & 1.64                      &                      & 4.94                     & 6.88                     & 5.78                      &                      & 9.44                     & 11.78                    & 11.26                     \\
$m = N^{0.65}$ &                      & 1.04                     & 1.70                     & 1.46                      &                      & 4.66                     & 6.30                     & 5.72                      &                      & 8.64                     & 11.50                    & 10.52                     \\
$m = N^{0.70}$ & \multicolumn{1}{l}{} & 1.00                     & 1.48                     & 1.44                      &                      & 4.62                     & 5.54                     & 5.44                      &                      & 9.26                     & 10.58                    & 10.34                     \\
$m = N^{0.75}$ &                      & 1.00                     & 1.46                     & 1.44                      &                      & 4.38                     & 5.98                     & 5.74                      &                      & 9.14                     & 11.00                    & 10.88                     \\ \hline
               &                      & \multicolumn{11}{c}{$\Delta = 0.60, \theta = 0.5$}                                                                                                                                                                                                                                                \\ \cline{3-13}
$m = N^{0.60}$ &                      & 0.72                     & 0.66                     & 0.82                      &                      & 3.18                     & 3.74                     & 3.94                      &                      & 6.44                     & 8.18                     & 8.12                      \\
$m = N^{0.65}$ &                      & 0.60                     & 0.54                     & 0.62                      &                      & 3.04                     & 3.94                     & 3.68                      &                      & 6.34                     & 7.78                     & 7.76                      \\
$m = N^{0.70}$ & \multicolumn{1}{l}{} & 0.78                     & 0.54                     & 0.76                      &                      & 3.20                     & 3.74                     & 3.80                      &                      & 6.78                     & 7.66                     & 8.08                      \\
$m = N^{0.75}$ &                      & 0.76                     & 0.64                     & 0.94                      &                      & 3.36                     & 4.34                     & 4.44                      &                      & 6.92                     & 8.40                     & 8.80                      \\ \hline
               &                      & \multicolumn{11}{c}{$\Delta = 0.60, \theta = 0.75$}                                                                                                                                                                                                                                               \\ \cline{3-13}
$m = N^{0.60}$ &                      & 1.24                     & 1.68                     & 1.80                      &                      & 4.80                     & 6.20                     & 6.24                      &                      & 8.96                     & 11.74                    & 11.20                     \\
$m = N^{0.65}$ &                      & 0.82                     & 1.42                     & 1.38                      &                      & 4.56                     & 6.40                     & 5.66                      &                      & 8.88                     & 11.20                    & 10.80                     \\
$m = N^{0.70}$ & \multicolumn{1}{l}{} & 1.04                     & 1.42                     & 1.54                      &                      & 4.74                     & 5.78                     & 5.66                      &                      & 9.20                     & 11.28                    & 10.16                     \\
$m = N^{0.75}$ &                      & 1.06                     & 1.46                     & 1.72                      &                      & 4.40                     & 6.14                     & 5.84                      &                      & 8.96                     & 11.16                    & 10.68                     \\ \hline
\end{tabular}}\end{table}

\end{document}